\newtheorem{theorem}{Theorem}
\newtheorem{lemma}[theorem]{Lemma}
\newtheorem{remark}[theorem]{Remark}
\numberwithin{theorem}{section}
\numberwithin{figure}{section}
\numberwithin{equation}{section}
\DeclareMathOperator{\dist}{dist}
\DeclareMathOperator{\SLE}{SLE}
\DeclareMathOperator{\dimH}{dim}
\begin{document}

\title{Remarks on the intersection of  $\SLE_{\kappa}(\rho)$ curve with the real line}
\author{Menglu WANG and Hao WU}


%
%
\maketitle

\abstract{$\SLE_{\kappa}(\rho)$ is a variant of $\SLE_{\kappa}$ where $\rho$ characterizes the repulsion (if $\rho>0$) or attraction $(\rho<0)$ from the boundary.
This paper examines the probabilities of $\SLE_{\kappa}(\rho)$ to get close to the boundary. We show how close the chordal $\SLE_{\kappa}(\rho)$ curves get to the boundary asymptotically, and provide an estimate for the probability that the $\SLE_{\kappa}(\rho)$ curve hits graph of functions.
These generalize the similar result derived by Schramm and Zhou for standard $\SLE_{\kappa}$. }
\medbreak
\noindent\textbf{Keywords:} $\SLE_{\kappa}(\rho)$, boundary proximity.
\medbreak
\noindent\textbf{AMS classification:} 60D05, 28A80.
\newcommand{\eps}{\epsilon}
\newcommand{\ov}{\overline}
\newcommand{\U}{\mathbb{U}}
\newcommand{\T}{\mathbb{T}}
\newcommand{\HH}{\mathbb{H}}
\newcommand{\LA}{\mathcal{A}}
\newcommand{\LC}{\mathcal{C}}
\newcommand{\LF}{\mathcal{F}}
\newcommand{\LK}{\mathcal{K}}
\newcommand{\LE}{\mathcal{E}}
\newcommand{\LL}{\mathcal{L}}
\newcommand{\LU}{\mathcal{U}}
\newcommand{\LV}{\mathcal{V}}
\newcommand{\LZ}{\mathcal{Z}}
\newcommand{\R}{\mathbb{R}}
\newcommand{\C}{\mathbb{C}}

\newcommand{\N}{\mathbb{N}}
\newcommand{\Z}{\mathbb{Z}}
\newcommand{\E}{\mathbb{E}}
\newcommand{\PP}{\mathbb{P}}
\newcommand{\QQ}{\mathbb{Q}}
\newcommand{\A}{\mathbb{A}}
\newcommand{\bn}{\mathbf{n}}
\newcommand{\MR}{MR}
\newcommand{\cond}{\,|\,}
\newcommand{\la}{\langle}
\newcommand{\ra}{\rangle}
\newcommand{\tree}{\Upsilon}
\section{Introduction}
Schramm Loewner Evolution ($\SLE$) is a random fractal curve in a simply connected domain connecting two boundary points. It is introduced by Oded Schramm \cite{SchrammFirstSLE} as the candidates of the scaling limits of discrete statistical models. It is indexed by a nonnegative real $\kappa\ge 0$. When $\kappa\in[0,4]$, $\SLE_{\kappa}$ are continuous simple curves and they do not touch the boundary except at the end points; when $\kappa>4$, they are still continuous curves but they touch the boundary and also touch themselves. We focus on the intersection of $\SLE$ paths (in the upper half-plane from the origin to $\infty$) with the boundary. When $\kappa>4$, the curves touch the boundary and the Hausdorff dimension of the intersection is $(2-8/\kappa)\wedge 1$ which is derived by Alberts and Sheffield in \cite{AlbertsSheffieldDimension}; when $\kappa\in [0,4]$, the curves do not touch the boundary,  Shramm and Zhou examined how close they get to the boundary asymptotically far away from the starting point in \cite{SchrammZhouBoundarySLE}.

$\SLE_{\kappa}(\rho)$ is a variant of $\SLE_{\kappa}$ process. Roughly speaking, the parameter $\rho$ tells that there is an attraction ($\rho<0$) or repulsion ($\rho>0$) from the boundary. When $\rho<\kappa/2-2$, the corresponding $\SLE_{\kappa}(\rho)$ curves will touch the boundary; when $\rho\ge \kappa/2-2$, the curves do not touch the boundary except at the end points. In this paper, we generalize the conclusion on the Hausdorff dimension result in \cite{AlbertsSheffieldDimension} to the intersection of $\SLE_{\kappa}(\rho)$ process with the boundary for $\rho<\kappa/2-2$ and generalize the conclusion on the boundary proximity result in \cite{SchrammZhouBoundarySLE} to $\SLE_{\kappa}(\rho)$ process for $\rho\ge \kappa/2-2$.

Note that the Hausdorff dimension for the intersection of $\SLE_{\kappa}(\rho)$ with the real line has been derived in \cite{MillerWuSLEIntersection} with a weaker version of One-Point Estimate and Two-Point Estimate where there are error terms in the exponent. In our paper, we derive these estimates up to constant.
In \cite{SchrammZhouBoundarySLE}, the authors give the boundary proximity result as well as a precise two-point Green's function on the boundary. In our paper, we only generalize the boundary proximity result to $\SLE_{\kappa}(\rho)$ process, but we do not get the corresponding Green's function for $\SLE_{\kappa}(\rho)$.

The following quantity is special and we fix it throughout the paper:

\begin{equation}\label{eqn::exponent}
\alpha=\alpha(\kappa, \rho):=\frac{(\rho+2)(\rho+4-\kappa/2)}{\kappa}.
\end{equation}

Note that, $\alpha>0$ when $\rho>(-2)\vee(\kappa/2-4)$; and $\alpha\ge 1$ when $\rho\ge \kappa/2-2$.

\begin{theorem}\label{thm::boundary_dimension}
Fix $\kappa>0$ and $\rho\in ((\kappa/2-4)\vee (-2), \kappa/2-2)$. Let $\eta$ be an $\SLE_{\kappa}(\rho)$ process with a single force point located at $0^+$. Almost surely,
\[\dimH_{H}(\eta\cap\R_+)=1-\alpha.\]
\end{theorem}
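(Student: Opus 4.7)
The plan is to follow the standard Frostman framework: establish a sharp one-point estimate and a sharp two-point estimate for the probability that $\eta$ gets $\epsilon$-close to a prescribed boundary point, and then deduce upper and lower bounds on $\dimH_H(\eta\cap\R_+)$ via first and second moment arguments.

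First I would prove the sharp one-point estimate, namely that for each fixed $x>0$,
\[
\PP[\dist(x,\eta)\le \epsilon\, x] \asymp \epsilon^{\alpha}
\qquad \text{as } \epsilon\to 0^+,
\]
with implicit constants independent of $x$ by scale invariance. The natural tool is the driving SDE for $\SLE_\kappa(\rho)$: the process $V_t-W_t$, where $W$ is the driving function and $V$ the image of the force point under the Loewner chain targeted at $\infty$, becomes (after time change) a Bessel process of dimension $\delta=1+2(\rho+2)/\kappa$. Working instead with the chain targeted at $x$, standard conformal arguments express the event $\{\dist(x,\eta)\le \epsilon x\}$ as an event on the associated Bessel-type process before it swallows $x$, and $\alpha$ arises as the exponent in the hitting probability of small scales. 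I would carry this out by introducing a local martingale of the form $(\text{conformal radius at }x)^{-\alpha}\cdot (\text{harmonic measure factor involving force point})$ and applying optional stopping to obtain matching upper and lower bounds.

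Next I would establish the two-point estimate
\[
\PP\bigl[\dist(x,\eta)\le \epsilon_1,\ \dist(y,\eta)\le \epsilon_2\bigr]
\;\lesssim\;
\Bigl(\frac{\epsilon_1}{x}\Bigr)^{\alpha}
\Bigl(\frac{\epsilon_2}{y-x}\Bigr)^{\alpha}
\quad\text{for } 0<x<y,\ \epsilon_i\ll \min(x,y-x),
\]
together with a matching lower bound under suitable separation. The idea is to stop the curve at the first moment $\tau$ it enters $B(x,\epsilon_1)$; apply the one-point estimate to bound the first factor; and then use the domain Markov property of $\SLE_\kappa(\rho)$ to see that, conditionally on $\LF_\tau$, the continuation is an $\SLE_\kappa(\rho)$-like process in $\HH\setminus \eta[0,\tau]$ with modified force-point configuration. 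Conformal distortion estimates show that the image of $y$ under the normalizing map still sees a macroscopic domain, so the one-point estimate applied in the new domain yields the second factor. This is the main technical step, because one must control the dependence of the conditional one-point estimate on the random geometry of $\eta[0,\tau]$ and on the relocated force point, uniformly in $\rho<\kappa/2-2$.

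With the two estimates in hand, the dimension statement follows by routine arguments. For the upper bound I would cover $[1,2]\cap\eta$ by dyadic intervals of length $\epsilon$, observe via the one-point estimate that the expected number of intervals hit is $O(\epsilon^{\alpha-1})$, and conclude that the Minkowski (and hence Hausdorff) dimension of $\eta\cap\R_+$ is at most $1-\alpha$. For the lower bound I would define the random occupation measures
\[
\mu_\epsilon(dx) \;=\; \epsilon^{-\alpha}\,\mathbf{1}_{\{\dist(x,\eta)\le \epsilon\}}\,\mathbf{1}_{[1,2]}(x)\,dx,
\]
use the one-point estimate to show $\E[\mu_\epsilon([1,2])]\asymp 1$ and the two-point estimate to show that $\E\!\int\!\int |x-y|^{-s}\,\mu_\epsilon(dx)\mu_\epsilon(dy)$ is uniformly bounded for any $s<1-\alpha$; then a standard subsequential weak limit yields a nonzero measure on $\eta\cap [1,2]$ with finite $s$-energy, and Frostman's lemma gives $\dimH_H(\eta\cap\R_+)\ge 1-\alpha$ almost surely on the positive-probability event that $\eta$ intersects $[1,2]$. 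A zero-one type argument (using the reversibility/self-similarity available for $\SLE_\kappa(\rho)$ in this regime) upgrades this to almost sure equality. The main obstacle I expect is the sharpness in the two-point estimate: removing the subpolynomial error in the exponent from the earlier argument of \cite{MillerWuSLEIntersection} requires exact martingale computations rather than soft conformal comparisons.
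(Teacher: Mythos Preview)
Your overall architecture---sharp one-point and two-point estimates, followed by the Frostman/occupation-measure argument and a zero--one law---is exactly what the paper does (it cites Beffara for the last step). The one-point estimate is also handled essentially as you describe, via the local martingale $M_t^x$ of Lemma~\ref{lem::martingale} together with Koebe-type comparisons (the paper imports the sharp constant from Lawler's Proposition~5.4).

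The genuine gap is in your two-point argument. You propose to stop at $\tau=\inf\{t:\eta(t)\in B(x,\epsilon_1)\}$ and then assert that ``the image of $y$ under the normalizing map still sees a macroscopic domain.'' This is not true on the event that $\eta$ has already approached $y$ before reaching $B(x,\epsilon_1)$: in the paper's notation $T=\tau$ and $S_k=\inf\{t:\dist(1+x,\eta[0,t])\le 2^{-k}\}$, and on $\{S_k<T\}$ the conditional one-point bound at $y$ only gives $(\delta\,2^{k})^{\alpha}$, which blows up. Summing naively over $k$ does not close with exponent $\alpha$. The paper's key extra ingredient is Lemma~\ref{lem::rare_event}: weighting by the martingale $M_t^{1}$ turns $\eta$ into an $\SLE_\kappa(\rho+2;\rho)$ process (Schramm--Wilson coordinate change), and the one-point exponent for that process at the second point is
\[
\tilde\alpha=\alpha(\kappa,\rho+2)=\frac{(\rho+4)(\rho+6-\kappa/2)}{\kappa}>\alpha.
\]
This yields $\PP[S_k<T<\infty]\lesssim \epsilon^{\alpha}2^{-k\tilde\alpha}x^{-\tilde\alpha}$, and it is precisely the gap $\tilde\alpha-\alpha>0$ that makes the dyadic sum $\sum_k 2^{k\alpha}\PP[S_k<T<\infty]$ converge to $\epsilon^{\alpha}x^{-\alpha}$. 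Your proposal acknowledges that controlling the conditional geometry at $\tau$ is ``the main technical step'' but does not supply this mechanism; without the change-of-measure upgrade from $\alpha$ to $\tilde\alpha$, the decomposition you sketch does not yield the sharp two-point bound.
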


Fix $r>0$, for a function $h:[r,\infty)\rightarrow(0,\infty)$, we denote the region under its graph by $\Gamma^{h}:=\{x+iy: x\in[r,\infty), 0< y\le h(x)\}$, and define
\begin{equation}\label{eqn::lambda_graph}
\Lambda^{h}_{\kappa,\rho}:=\begin{cases}
h(x)^{\alpha-1} &\rho>\frac{\kappa}{2}-2 ,\\
1/\log(\frac{x}{h(x)}\vee 2)&\rho=\frac{\kappa}{2}-2.
\end{cases}\end{equation}

\begin{theorem}\label{thm::boundary_proximity}
Fix $\kappa>0$ and $\rho\ge\frac{\kappa}{2}-2$. Let $\eta$ be an $\SLE_{\kappa}(\rho)$ process with a single force point located at $0^+$. Fix $r>1$, and suppose that $h:[r,\infty)\rightarrow(0,\infty)$ is continuous and satisfies
\begin{equation}\label{condition1}
\sup\{\Lambda^{h}_{\kappa,\rho}(x)/\Lambda^{h}_{\kappa,\rho}(y): r\le x\le y\le 2x\}<\infty.
\end{equation}
If
\begin{equation}\label{condition2}
\int_{r}^{\infty}\frac{\Lambda^{h}_{\kappa,\rho}(x)}{x^{\alpha}}\textrm{d}x<\infty,
\end{equation}
then $\eta\cap\Gamma^{h}$ is bounded a.s. Conversely, if the integral in Equation (\ref{condition2}) is infinite, then $\eta\cap\Gamma^{h}$ is unbounded a.s.
\end{theorem}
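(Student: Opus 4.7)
The plan is to follow the Schramm--Zhou strategy: reduce Theorem \ref{thm::boundary_proximity} to a sharp one-point estimate and an approximate independence (two-point) estimate for the events that $\eta$ enters a narrow strip above the boundary, and then extract the dichotomy from the two Borel--Cantelli lemmas. Concretely, I would partition $[r,\infty)$ into dyadic intervals $I_n = [2^n, 2^{n+1}]$, further subdivide each $I_n$ into segments of length $\asymp h(2^n)$, and let $A_n$ be the event that $\eta$ enters one of the resulting boxes of width and height $\asymp h(2^n)$ sitting above $I_n$. Condition (\ref{condition1}) tells us that $h$ varies only by a bounded factor across $I_n$, so the discretisation is lossless, and up to null events $\{\eta \cap \Gamma^h \text{ unbounded}\} = \limsup_n A_n$.

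The technical heart is a one-point estimate of the form
\[ \PP[A_n] \asymp \Lambda^{h}_{\kappa,\rho}(2^n)\big/ 2^{n(\alpha-1)}, \]
after which (\ref{condition1}) together with dyadic summation convert (\ref{condition2}) into a summability criterion for $\sum_n \PP[A_n]$. To derive this I would use the standard $\SLE_\kappa(\rho)$ martingale: for $z = x + ih(x)$ with $x \in I_n$, there is a local martingale of the shape $\CR(z, \HH \setminus \eta_t)^\alpha \cdot F_t(z)$, where $F_t$ is a boundary-data correction factor depending on the conformal images of the curve tip, the force point and $z$, and the exponent $\alpha$ is exactly the one defined in (\ref{eqn::exponent}). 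Optional stopping at the first time $\eta$ reaches the disc of radius $h(x)/2$ around $z$, combined with the Koebe quarter theorem to pass between conformal radius and Euclidean distance, yields matching upper and lower bounds. In the critical case $\rho = \kappa/2-2$ one has $\alpha = 1$ and the martingale degenerates; working instead with a derivative martingale, formally $\partial_\alpha$ of the above evaluated at $\alpha=1$, produces the $1/\log(x/h(x))$ weight in $\Lambda^h_{\kappa,\rho}$.

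For the convergent direction the one-point estimate and (\ref{condition2}) give $\sum_n \PP[A_n]<\infty$, so the first Borel--Cantelli lemma forces $\eta \cap \Gamma^h$ to be bounded a.s. For the divergent direction I would combine the matching lower bound with an approximate independence estimate
\[ \PP[A_n \cap A_m] \lesssim \PP[A_n]\,\PP[A_m], \qquad n \ll m. \]
This should follow from the domain Markov property of $\SLE_\kappa(\rho)$: conditionally on $\eta$ up to the first time it reaches scale $2^{n+1}$, the future is again an $\SLE_\kappa(\rho)$ in the slit domain with an updated boundary force point, and a uniform version of the one-point estimate applied there, composed with a conformal change of coordinates back to $\HH$, yields the bound. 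The second Borel--Cantelli lemma then produces $\PP[\limsup_n A_n] = 1$.

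The main obstacle I anticipate is the critical case $\rho = \kappa/2-2$. Both the one-point estimate and the independence argument must be carried out with logarithmic precision, because factors of order $\log(x/h(x))$ govern the integral test. The natural positive martingale collapses there, so one is forced either to work with a derivative martingale or to perturb $\alpha$ slightly and control error terms as the perturbation vanishes; recovering the sharp $1/\log$ weight of $\Lambda^h_{\kappa,\kappa/2-2}$ on both sides of the dichotomy is the delicate part.
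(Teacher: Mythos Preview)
Your high-level plan (Schramm--Zhou: one- and two-point estimates feeding a Borel--Cantelli/second-moment dichotomy) is the right one and is what the paper does, but two of your ingredients need correction.

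The martingale you propose is the wrong object. A conformal-radius martingale attached to an \emph{interior} point $z=x+ih(x)$ carries the bulk exponent, not the boundary exponent $\alpha$ of \eqref{eqn::exponent}; there is no reason these should coincide. The events in this theorem concern proximity of $\eta$ to real points, and the relevant tool is the \emph{boundary} martingale $M_t^x$ of Lemma~\ref{lem::martingale} for $x\in\R$, together with the sharp estimates $\PP[\dist(x,\eta)\le\eps]\asymp\eps^\alpha$ and $\PP[\dist(x,\eta)\le\eps,\ \dist(x+y,\eta)\le\delta]\lesssim\eps^\alpha\delta^\alpha y^{-\alpha}$ of Theorem~\ref{thm::assumption} (plus their conditional versions, Lemmas~\ref{lem::onepoint_conditional} and \ref{lem::twopoint_conditional}). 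These are the black boxes that get fed into the Schramm--Zhou argument.

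Your worry about $\rho=\kappa/2-2$ is justified but the proposed remedy is not: no derivative martingale is needed. The estimates above hold verbatim with $\alpha=1$; the $1/\log$ weight enters only at the \emph{integration} stage. For the bounded direction the paper does not use a union-bound first Borel--Cantelli (which would indeed fail to produce the $1/\log$ when $\alpha=1$) but rather the supermartingale $Z_t=\int_r^\infty \Lambda(x)\,M_t^x\,dx$: one shows $Z_\tau\gtrsim 1$ at the first entrance time $\tau$ into $\Gamma^h$ using $M_\tau^x\gtrsim\dist(x,\eta[0,\tau])^{-\alpha}$ (Lemma~\ref{estimateofmartingale1}), and when $\alpha=1$ the integral $\int_{x_0+y_0}^{2x_0}(x-x_0)^{-1}\,dx\asymp\log(x_0/y_0)$ produces exactly the factor that cancels $\Lambda(x_0)$. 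For the unbounded direction the second moment of $Q=\int_r^\infty\Lambda(x)\,h(x)^{-\alpha}\,1_{\{\dist(x,\eta)\le h(x)\}}\,dx$ is controlled by the two-point estimate, whose $|x-y|^{-\alpha}$ factor integrates to a $\log$ when $\alpha=1$; the weight $\Lambda$ is chosen precisely so that $\E[Q^2]\lesssim\E[Q]$ holds uniformly. In short, the same boundary martingale and the same estimates handle all $\rho\ge\kappa/2-2$; the logarithm is a feature of the integration, not of a new martingale.
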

\begin{remark}
As a consequence of Theorem \ref{thm::boundary_proximity}, we have the following observations.

If $\rho>\kappa/2-2$ and \[h(x)=x(\log x)^{-u},\] then $\eta\cap\Gamma^{h}$ is a.s. bounded when  $u>1/(\alpha-1)$, and a.s. unbounded when $u\le 1/(\alpha-1)$.

If $\rho=\kappa/2-2$ and \[h(x)=x^{-(\log\log x)^{u}},\] then $\eta\cap\Gamma^{h}$ is a.s. bounded when  $u>1$, and a.s. unbounded when $u\le 1$.
\end{remark}

\begin{remark} \label{rem::estimate_hittingproba}
Assume the same notations as in Theorem \ref{thm::boundary_proximity}.
We can also obtain an estimate on the probability that $\eta$ hits $\Gamma^h$. Suppose that Equations (\ref{condition1}) and (\ref{condition2}) hold and suppose further that $h(x)\le x/2$ for all $x\ge r$, then we have
\[\PP[\eta\cap \Gamma^h\neq\emptyset]\asymp 1\wedge \int_{r}^{\infty}\frac{\Lambda^{h}_{\kappa,\rho}(x)}{x^{\alpha}}\textrm{d}x.\]
\end{remark}

The most important ingredient for the proofs of Theorems \ref{thm::boundary_dimension} and \ref{thm::boundary_proximity} is the following estimate.\footnote{We write $f\asymp g$ if there exists a constant $C\ge 1$ such that $C^{-1}f(x)\le g(x)\le Cf(x)$ for all $x$. We write $f\lesssim g$ if there exists a constant $C>0$ such that $f(x)\le Cg(x)$ and $f\gtrsim g$ if $g\lesssim f$. We denote by $\dist$ the Euclidean distance.}
\begin{theorem}\label{thm::assumption}
Fix $\kappa>0$ and $\rho>(\kappa/2-4)\vee(-2)$. Let $\eta$ be an $\SLE_{\kappa}(\rho)$ process with a single force point located at $x^{R}\in[0^+,1/2]$. For all $0<\epsilon<1$, $\delta>0$, $x>0$, we have One-Point Estimate
\begin{equation}\label{onepoint}
\PP\left[\dist(1, \eta)\le \eps\right]\asymp\epsilon^{\alpha},
\end{equation}
and Two-Point Estimate
\begin{equation}\label{twopoints}
\PP\left[\dist(1, \eta)\le \eps, \dist(1+x, \eta)\le \delta\right]\lesssim\epsilon^{\alpha}\delta^{\alpha}x^{-\alpha},
\end{equation}
where $\alpha$ is the same as in Equation (\ref{eqn::exponent}) and the constants in $\asymp$ and in $\lesssim$ are uniform over all $x^{R}\in[0^+,1/2]$ and $x>0$.
\end{theorem}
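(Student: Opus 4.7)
The plan is to follow the Girsanov--martingale method used by Schramm--Zhou \cite{SchrammZhouBoundarySLE} for $\SLE_\kappa$ (and in weaker form by Miller--Wu \cite{MillerWuSLEIntersection} for $\SLE_\kappa(\rho)$), refined to produce sharp constants rather than exponent bounds with error terms. Let $(g_t, W_t)$ be the Loewner pair of $\eta$, let $V_t^R = g_t(x^R)$, and write
\[
A_t = g_t(1) - W_t, \qquad U_t = V_t^R - W_t, \qquad D_t = g_t'(1).
\]
By Koebe's $1/4$-theorem, $\dist(1,\eta_t) \asymp A_t/D_t$ while $1$ is accessible from $\infty$. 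I first construct a local martingale of the form
\[
M_t \;=\; D_t^{\lambda}\,A_t^{\mu}\,(A_t - U_t)^{\nu}
\]
(with an additional factor $U_t^{\xi}$ if required), the exponents being determined by computing the drift of $dM_t/M_t$ via It\^o's formula from the $\SLE_\kappa(\rho)$ SDEs for $(W_t,V_t^R)$ and setting the coefficients of $1/A_t^2$, $1/U_t^2$, and $1/(A_t U_t)$ equal to zero. This yields a small algebraic system with a one-parameter family of solutions. I pick the member for which the Girsanov change of measure $d\widetilde{\PP}/d\PP = M_t/M_0$ turns $\eta$ into an $\SLE_\kappa(\rho,\rho')$ with an added force point at $1$, where $\rho'$ is attractive enough that under $\widetilde{\PP}$ the curve almost surely accumulates at $1$. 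The exponent $\alpha$ from \eqref{eqn::exponent} then appears as the boundary scaling exponent of this weighted process.

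For the \emph{one-point estimate}, set $\tau_\epsilon = \inf\{t: \dist(1,\eta_t) \leq \epsilon\}$. On $\{\tau_\epsilon<\infty\}$, Koebe distortion gives $A_{\tau_\epsilon} \asymp \epsilon D_{\tau_\epsilon}$; since $x^R \leq 1/2$ the force point is separated from the approach region, so standard Loewner derivative estimates give $A_{\tau_\epsilon} - U_{\tau_\epsilon} \asymp D_{\tau_\epsilon}$ and $U_{\tau_\epsilon} \asymp D_{\tau_\epsilon}$ up to bounded factors. Substituting yields $M_{\tau_\epsilon} \asymp \epsilon^{-\alpha}$; combined with $M_0 \asymp 1$ uniformly in $x^R \in [0^+,1/2]$ and $\widetilde{\PP}[\tau_\epsilon<\infty]=1$, the Radon--Nikodym identity
\[
\PP[\tau_\epsilon<\infty] \;=\; M_0\,\widetilde{\E}\bigl[M_{\tau_\epsilon}^{-1}\bigr]
\]
gives $\PP[\tau_\epsilon<\infty] \asymp \epsilon^\alpha$ with uniform constants.

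For the \emph{two-point estimate}, apply the strong Markov property of $\SLE_\kappa(\rho)$ at $\tau_\epsilon$. On $\{\tau_\epsilon<\infty\}$, the normalized map $f_{\tau_\epsilon} := g_{\tau_\epsilon} - W_{\tau_\epsilon}$ sends $\HH \setminus \eta_{[0,\tau_\epsilon]}$ onto $\HH$, and $\eta|_{[\tau_\epsilon,\infty)}$ maps to an $\SLE_\kappa(\rho)$ from $0$ to $\infty$ with force point at $U_{\tau_\epsilon}$. We may assume $x \geq 2\epsilon$ (otherwise the bound is immediate); then $1+x$ lies at distance $\gtrsim x$ from $\eta_{\tau_\epsilon}$, so Koebe gives $f_{\tau_\epsilon}(1+x) \asymp xD_{\tau_\epsilon}$ and the target scale $\delta$ maps to $\asymp \delta D_{\tau_\epsilon}$. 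After rescaling by $1/f_{\tau_\epsilon}(1+x)$, the new force point lies at $U_{\tau_\epsilon}/f_{\tau_\epsilon}(1+x) \lesssim \epsilon/x \leq 1/2$, the target becomes the point $1$, and the target radius becomes $\asymp \delta/x$. The one-point estimate applies in this rescaled setting, giving conditional probability $\lesssim (\delta/x)^\alpha$; multiplying by $\PP[\tau_\epsilon<\infty]\asymp\epsilon^\alpha$ yields the desired $\epsilon^\alpha \delta^\alpha x^{-\alpha}$.

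The main technical obstacle is rigorously justifying the Girsanov identity up to $\tau_\epsilon$: since $M_t$ is only a local martingale and its factors can develop singularities when the curve approaches $1$ or swallows the force point, the Radon--Nikodym identity must be obtained by simultaneously stopping at $\tau_\epsilon$ and at the first time $U_t/D_t$ leaves a compact subinterval of $(0,\infty)$, applying optional stopping, and then taking a limit. A related subtlety is the degenerate endpoint $x^R \to 0^+$, where $M_0$ may involve a factor $(x^R)^\xi$ requiring a limiting procedure that exploits continuity of the $\SLE_\kappa(\rho)$ law in its force-point parameter.
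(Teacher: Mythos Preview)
Your plan has two genuine gaps, one in each estimate.

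\textbf{One-point estimate.} The claim that ``standard Loewner derivative estimates give $A_{\tau_\epsilon}-U_{\tau_\epsilon}\asymp D_{\tau_\epsilon}$ and $U_{\tau_\epsilon}\asymp D_{\tau_\epsilon}$'' is false. In fact $A_{\tau_\epsilon}-U_{\tau_\epsilon}=g_{\tau_\epsilon}(1)-V_{\tau_\epsilon}\asymp \epsilon\,D_{\tau_\epsilon}$ by Koebe (this is Lemma~\ref{lem::mart_dist}), not $D_{\tau_\epsilon}$; and $U_{\tau_\epsilon}=V_{\tau_\epsilon}-W_{\tau_\epsilon}$ has \emph{no} deterministic comparison with $D_{\tau_\epsilon}$---it can be of the same order as $A_{\tau_\epsilon}-U_{\tau_\epsilon}$ or far larger. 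Consequently $M_{\tau_\epsilon}\asymp \epsilon^{-\alpha}$ does not hold pathwise; the extra factor $\bigl((A_t-U_t)/A_t\bigr)^{\beta}$ in the martingale is genuinely random at $\tau_\epsilon$ and controlling its expectation is the entire content of the sharp estimate. The paper does not attempt this directly: it imports the result from Lawler \cite[Proposition~5.4]{LawlerMinkowskiSLERealLine}, where the ratio $(g_t(1)-V_t)/(g_t(1)-W_t)$ is analyzed as a one-dimensional diffusion after a time change, and the sharp constant comes from that diffusion analysis, not from distortion bounds.

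\textbf{Two-point estimate.} Your Markov-property argument bounds only
\[
\PP\bigl[\tau_\epsilon<\infty,\ \dist(1+x,\eta|_{[\tau_\epsilon,\infty)})\le\delta\bigr],
\]
and moreover tacitly assumes $\dist(1+x,\eta[0,\tau_\epsilon])\gtrsim x$ so that Koebe applies at $\tau_\epsilon$. But the event in \eqref{twopoints} includes the possibility that $\eta$ comes within $\delta$ of $1+x$ \emph{before} reaching distance $\epsilon$ from $1$; on that event the point $1+x$ may already be swallowed (or at distance $\ll x$) at time $\tau_\epsilon$, so your conditional bound $(\delta/x)^\alpha$ is unavailable. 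The paper handles this by a dyadic decomposition in the approach scale to $1+x$ and a separate estimate (Lemma~\ref{lem::rare_event}) for $\PP[S<T<\infty]$, obtained via a M\"obius map sending $1\mapsto\infty$ together with the Girsanov change turning $\eta$ into an $\SLE_\kappa(\rho+2;\rho)$; this produces the strictly larger exponent $\tilde\alpha=\alpha(\kappa,\rho+2)>\alpha$, which is exactly what makes the dyadic sum converge. Without that piece the argument does not close.
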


We conclude the introduction by explaining the relation to previous works.
Theorem \ref{thm::boundary_dimension} has been derived in \cite[Theorem 1.6]{MillerWuSLEIntersection} by the coupling between $\SLE$ paths and Gaussian Free Field. Theorem \ref{thm::assumption} has been obtained in \cite{AlbertsKozdronIntersectionProbaSLEBoundary} for standard $\SLE_{\kappa}$ curves, and it has been obtained in \cite[Proposition 7]{WernerWuCLEtoSLE} for $\SLE_{\kappa}(\rho)$ with $\kappa\in [8/3,4]$ by the construction of $\SLE$ paths from Brownian excursion soup and Brownian loop soup. In our paper, in the proof of Theorem \ref{thm::assumption}, we use tools developed in a recent work by Greg Lawler \cite{LawlerMinkowskiSLERealLine}, and we prove it for all $\SLE_{\kappa}(\rho)$ processes.
\medbreak
\noindent\textbf{Outline.}We will give preliminaries of $\SLE_{\kappa}(\rho)$ process in Section \ref{sec::preliminaries}. We prove Theorem \ref{thm::assumption} in Section \ref{sec::onepoint}. In fact, once we obtain One-Point Estimate and Two-Point Estimate (and their conditional versions), the proofs of Theorems \ref{thm::boundary_dimension} and \ref{thm::boundary_proximity} become standard. We will briefly explain these proofs in Section \ref{sec::boundaryproximity}.

\medbreak
\noindent\textbf{Acknowledgment.} The authors thank Scott Sheffield for helpful comments on the previous version of this article. H. Wu' s work is funded by NSF DMS-1406411.

\section{Preliminaries}\label{sec::preliminaries}
Loewner chain is a collection of compact hulls $(K_{t}, t\ge 0)$ associated with the family of conformal maps $(g_{t}, t\ge 0)$ obtained by solving the Loewner equation: for each $z\in\mathbb{H}$,
\begin{equation}\label{loewner}
\partial_{t}{g}_{t}(z)=\frac{2}{g_{t}(z)-W_{t}}, \quad g_{0}(z)=z,
\end{equation}
where $(W_t, t\ge 0)$ is a one-dimensional continuous function which we call the driving function. Let $T_z$ be the swallowing time of $z$ defined as $\sup\{t\ge 0: \min_{s\in[0,t]}|g_{s}(z)-W_{s}|>0\}$.
Let $K_{t}:=\overline{\{z\in\mathbb{H}: T_{z}\le t\}}$. Then $g_{t}$ is the unique conformal map from $H_{t}:=\mathbb{H}\backslash K_{t}$ onto $\mathbb{H}$ such that $\lim_{|z|\rightarrow\infty}|g_{t}(z)-z|=0$.

An $\SLE_{\kappa}$ is the random Loewner chain $(K_{t}, t\ge 0)$ driven by $W_t=\sqrt{\kappa}B_t$ where $(B_t, t\ge 0)$ is a standard one-dimensional Brownian motion.
In \cite{RohdeSchrammSLEBasicProperty}, the authors prove that $(K_{t}, t\ge 0)$ is almost surely generated by a continuous curve, i.e. there almost surely exists a continuous curve $\eta$ such that for each $t\ge 0$, $H_{t}$ is the unbounded connected component of $\mathbb{H}\backslash\eta([0,t])$.

\subsection{$\SLE_{\kappa}(\rho)$}

$\SLE_{\kappa}(\rho)$ process is a generalization of $\SLE_{\kappa}$ in which one keeps track of one additional marked point which we call the force point. Suppose that $x^{R}\ge 0$. We associate with the force point $x^{R}$ a weight $\rho\in\mathbb{R}$. An SLE$_{\kappa}(\rho)$ process with force point $x^{R}$ is the Loewner chain driven by $W_{t}$ which is the solution to the following systems of SDEs:
\begin{equation}\label{Loewnereq}
\begin{gathered}
dW_{t}=\sqrt{\kappa}dB_{t}+\frac{\rho dt}{W_{t}-V_{t}}, \quad W_0=0;\quad dV_{t}=\frac{2dt}{V_{t}-W_{t}}, \quad V_{0}=x^{R}.\\
\end{gathered}
\end{equation}
We also say that $\SLE_{\kappa}(\rho)$ process is driven by the pair $(W_t, V_t, t\ge 0)$.
Fix $\kappa>0,\rho>-2$, the solution to Equation (\ref{Loewnereq}) exists for all times $t\ge 0$. The corresponding Loewner chain is almost surely generated by a continuous curve $\eta$ which is almost surely transient (\cite[Section 8]{LawlerSchrammWernerConformalRestriction} and \cite[Theorem 1.3]{MillerSheffieldIG1}): $\lim_{t\rightarrow\infty}\eta(t)=\infty$. 
If $\rho\ge\kappa/2-2$, the curve $\eta$ almost surely does not hit the positive real line $\mathbb{R}_{+}$.

$\SLE_{\kappa}(\rho)$ process satisfies Scaling Invariance and Domain Markov Property: 
Fix $\kappa>0,\rho>-2$, let $(K_t, t\ge 0)$ be the chordal Loewner chain corresponding to $\SLE_{\kappa}(\rho)$ process with force point $x^R$, and let $(W_t, V_t, t\ge 0)$ be the driving function.
\begin{itemize}
\item (Scaling Invariance) For any $\lambda>0$, ($\lambda^{-1}K_{\lambda^{2}t}, t\ge 0$) has the same law as an $\SLE_{\kappa}(\rho)$ process with force point $x^{R}/\lambda$. In particular, if $x^{R}=0^+$, it is scaling invariant.
\item (Domain Markov Property) Define $f_{t}:=g_{t}-W_{t}$ as the centered conformal map, then for any finite stopping time $\tau$, the curve $(\tilde{\eta}(t):=f_{\tau}(\eta(t+\tau)), t\ge 0)$ is an $\SLE_{\kappa}(\rho)$ process with force point $V_{\tau}-W_{\tau}$.
\end{itemize}

\begin{lemma}\label{lem::martingale}
Fix $\kappa>0, \rho>-2$ and $\alpha$ is defined in Equation (\ref{eqn::exponent}).
Suppose that $\eta$ is an $\SLE_{\kappa}(\rho)$ process with a single force point located at $x^{R}\ge 0$. Let $(g_{t}, t\ge 0)$ be the corresponding conformal maps driven by $(W_{t}, V_{t}, t\ge 0)$. Define for $x>x^{R}$, $t\ge 0$,
\begin{equation}\label{eqn::martingale}
M_{t}^{x}:=\left(\frac{g_{t}'(x)}{g_{t}(x)-V_{t}}\right)^{\alpha}\left(\frac{g_{t}(x)-V_{t}}{g_{t}(x)-W_{t}}\right)^{\beta}.
\end{equation}
where $\beta=(8+2\rho-\kappa)/\kappa$. Then $M_{t}^{x}$ is well-defined up to the first time that $x$ is swallowed and $(M_t^x, t\ge 0)$ is a local martingale.
\end{lemma}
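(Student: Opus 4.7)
The plan is to prove that $M_t^x$ is a local martingale by a direct Itô-calculus computation on $\log M_t^x$, confirming that the drift term vanishes precisely because of the specific choice of $\alpha$ given in Equation (\ref{eqn::exponent}) and $\beta=(8+2\rho-\kappa)/\kappa$.

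First, to streamline the notation I would introduce the auxiliary processes
\[
O_t := g_t(x) - W_t, \qquad X_t := g_t(x) - V_t,
\]
both strictly positive up to the swallowing time of $x$ (since $x > x^R \ge 0$ implies $g_t(x) > V_t > W_t$), and rewrite
\[
\log M_t^x \;=\; \alpha \log g_t'(x) \;+\; (\beta - \alpha)\log X_t \;-\; \beta \log O_t.
\]
From the Loewner equation (\ref{loewner}) and the SDEs (\ref{Loewnereq}) I would then compute
\[
d\log g_t'(x) = -\frac{2}{O_t^2}\,dt, \qquad dX_t = \frac{-2X_t}{O_t(O_t - X_t)}\,dt,
\]
and
\[
dO_t = \frac{2}{O_t}\,dt + \frac{\rho}{O_t - X_t}\,dt - \sqrt{\kappa}\,dB_t,
\]
using $V_t - W_t = O_t - X_t$.

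Next, applying Itô's formula to each logarithm and collecting terms gives an expression of the form
\[
d\log M_t^x \;=\; \frac{A}{O_t^2}\,dt \;+\; \frac{C}{O_t(O_t-X_t)}\,dt \;+\; \frac{\beta\sqrt{\kappa}}{O_t}\,dB_t
\]
for explicit constants $A,C$ depending on $\alpha,\beta,\kappa,\rho$. Since $M_t^x = \exp(\log M_t^x)$, the process is a local martingale if and only if the drift of $\log M_t^x$ equals $-\tfrac12$ times the square of its diffusion coefficient, i.e.\ $A/O_t^2 + C/O_t(O_t-X_t) = -\beta^2\kappa/(2O_t^2)$. Matching the two distinct singular factors $O_t^{-2}$ and $[O_t(O_t-X_t)]^{-1}$ yields two scalar identities; the $[O_t(O_t-X_t)]^{-1}$ identity reduces to $2\alpha = \beta(\rho+2)$, which is exactly the definition of $\alpha$ in (\ref{eqn::exponent}) once one substitutes $\beta = (8+2\rho-\kappa)/\kappa$. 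The $O_t^{-2}$ identity then simplifies, using $\beta\kappa = 8+2\rho-\kappa$, to the same relation $\alpha = \beta(\rho+2)/2$, so consistency is automatic.

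No serious obstacle is expected; the only mild care is in verifying that $M_t^x$ is well-defined up to the swallowing time of $x$, which follows because $O_t, X_t, g_t'(x)$ all stay strictly positive and finite on that time interval, so every logarithm and power is unambiguously defined and the local martingale property follows from Itô's formula applied to $\exp(\log M_t^x)$.
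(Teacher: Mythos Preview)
Your computation is correct: both the $O_t^{-2}$ and the $[O_t(O_t-X_t)]^{-1}$ coefficients in the drift of $\log M_t^x$ vanish (after adding the It\^o correction $\tfrac12\beta^2\kappa/O_t^2$) precisely under the single relation $2\alpha=\beta(\rho+2)$, which is equivalent to the definition of $\alpha$ once $\beta=(8+2\rho-\kappa)/\kappa$ is substituted. The well-definedness up to the swallowing time is also argued correctly.

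The paper takes a different route: it does not carry out the It\^o calculation but simply invokes \cite[Theorem~6 and Remark~7]{SchrammWilsonSLECoordinatechanges}, where this local martingale arises as a special case of the general $\SLE$ coordinate-change martingales. Your approach is self-contained and verifies the exponents by hand; the paper's citation places the result in the broader framework of $\SLE_\kappa(\rho)$ partition functions and makes transparent why such a martingale should exist (it is the Radon--Nikodym density for adding an extra force point of weight $2$ at $x$). Either is perfectly adequate here; the direct computation has the advantage that a reader need not consult another paper, while the citation conveys more structural information.

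One minor remark: in your derivation the factor $1/(O_t-X_t)=1/(V_t-W_t)$ appears in $dX_t$ and $dO_t$, and this can be singular at times when the force point collides with the tip. The cancellation in the drift happens before one needs to worry about this, and $M_t^x$ itself depends only on $g_t(x)-V_t$, $g_t(x)-W_t$, and $g_t'(x)$, none of which vanish before $x$ is swallowed; but you might add a sentence to that effect, or localize away from the collision times, to make the argument airtight.
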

\begin{proof}
\cite[Theorem 6 and Remark 7]{SchrammWilsonSLECoordinatechanges}.
\end{proof}
\begin{lemma}\label{lem::mart_dist}
Assume the same notations as in Lemma \ref{lem::martingale}. For $x>x^R$ and $t>0$ which is before the swallowing time of $x$, we have that
\[\frac{x-x^R}{4}\dist(x, \eta([0,t]))\le \frac{g_t(x)-V_t}{g_t'(x)}\le 4\dist(x, \eta([0,t])).\]
\end{lemma}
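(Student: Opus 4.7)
The plan is to derive both bounds via the Koebe $1/4$ theorem and the Koebe distortion theorem, exploiting Schwarz reflection of $g_t$ across the portion of $\mathbb{R}$ in $\partial H_t$ lying to the right of the hull $K_t$. Since $x>x^R$ is not swallowed, the map $g_t\colon H_t\to\mathbb{H}$ extends to a univalent analytic function on a full complex disk around $x$ whose radius is comparable to $\dist(x,\eta([0,t]))$. The key rewriting is $V_t=g_t(x^R)$, so the middle quantity in the statement equals the normalized conformal displacement $(g_t(x)-g_t(x^R))/g_t'(x)$.

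For the upper bound, I would apply the Koebe $1/4$ theorem to the inverse map $f_t:=g_t^{-1}$. Since $V_t\ge W_t$, the Euclidean disk $B(g_t(x),g_t(x)-V_t)$ lies inside the domain of analyticity of $f_t$ (extended by Schwarz reflection across a neighborhood of $(W_t,\infty)$ on $\mathbb{R}$). Its univalent image under $f_t$ must contain the Euclidean disk $B(x,(g_t(x)-V_t)/(4g_t'(x)))$; this image disk lies in $H_t$ and is therefore disjoint from $\eta([0,t])$, which gives the right-hand inequality.

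For the lower bound, I would apply the Koebe distortion theorem to $g_t$ on the analyticity disk $B(x,R)$, where $R\asymp\dist(x,\eta([0,t]))$. For $w$ in a fixed fraction of $B(x,R)$, distortion yields $|g_t(w)-g_t(x)|\asymp|w-x|\,g_t'(x)$. When $x^R$ lies in this smaller disk, taking $w=x^R$ directly produces the desired lower bound. Otherwise, monotonicity of $g_t$ along the real axis lets us reduce to a point $w$ at distance of order $R$ from $x$, and the factor $(x-x^R)$ appearing in the statement reflects the ratio $(x-x^R)/R$ between the two length scales in this regime.

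The main obstacle is the case analysis on the relative sizes of $(x-x^R)$ and $\dist(x,\eta([0,t]))$ together with the bookkeeping needed to assemble the specific constants. In each individual regime the Koebe estimates only produce the correct order of magnitude, so matching them to recover the precise factors $(x-x^R)/4$ and $4$ in the stated inequality requires careful optimization at the boundary where the two regimes meet.
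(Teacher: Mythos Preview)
Your upper bound is essentially the paper's argument: both use Schwarz reflection and Koebe $1/4$, the only difference being that you apply Koebe to $g_t^{-1}$ on the disk $B(g_t(x),g_t(x)-V_t)$, while the paper applies it to $g_t$ on $B(x,R)$ and then uses $V_t\ge O_t$, where $O_t$ is the image of the rightmost point of $K_t\cap\mathbb{R}$.

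For the lower bound the two routes genuinely diverge. The paper does \emph{not} use Koebe distortion. Instead it again works with $O_t$: Koebe $1/4$ gives $(g_t(1)-O_t)/g_t'(1)\ge \tfrac14\dist(1,\eta([0,t]))$, and then a purely real-variable step shows $g_t(1)-O_t\le \frac{1}{1-x^R}(g_t(1)-V_t)$. That step is just the mean value theorem on $[x^R,1]$ together with the facts that $g_t'\le 1$ and $y\mapsto g_t'(y)$ is increasing (both immediate from the Loewner ODE). This cleanly separates the conformal estimate from the comparison of $V_t$ with $O_t$, and yields the exact constant $(1-x^R)/4$ with no optimization. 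Your distortion approach also works and in fact does recover the sharp constant: with $R=\dist(1,\eta([0,t]))\le 1$ and $s=(1-x^R)/R\le 1$ one gets $(1-x^R)/(1+s)^2\ge (1-x^R)R/4$, and in the complementary regime $1-x^R>R$ monotonicity plus the boundary case of distortion gives $R/4\ge (1-x^R)R/4$. So your stated ``main obstacle'' is not really one.

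One genuine gap: your ``key rewriting $V_t=g_t(x^R)$'' is only valid when $x^R\notin K_t$. When the force point has been swallowed, $g_t(x^R)$ is undefined and $V_t$ equals $O_t$. You need to treat this case separately (it is in fact the easy one, since then the lower bound follows directly from Koebe $1/4$ without any factor $(1-x^R)$).
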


\begin{proof}
We may assume that $x=1$.

Suppose that $(K_t, t\ge 0)$ is the chordal Loewner chain corresponding to $\SLE_{\kappa}(\rho)$ process with force point $x^R$ and that $(W_t, V_t, t\ge 0)$ is the driving function.
Let $\overline{K}_{t}:=\{\bar{z}:z\in K_{t}\}$, and by Schwarz Reflection, the conformal map $g_{t}$ can be extended to $\mathbb{C}\backslash(K_{t}\cup\overline{K}_{t}\cup\mathbb{R}_{-})$. Let $O_t$ be the image of the rightmost point of $K_t\cap\R$ under $g_t$. Then Koebe 1/4 Theorem implies that
\begin{equation*}
\frac{1}{4}\dist\left(1, \eta([0,t])\right)\le \frac{g_{t}(1)-O_{t}}{g'_{t}(1)}\le 4\dist\left(1, \eta([0,t])\right).
\end{equation*}
Thus,
\[\frac{g_{t}(1)-V_t}{g'_{t}(1)}\le \frac{g_{t}(1)-O_{t}}{g'_{t}(1)}\le 4\dist\left(1, \eta([0,t])\right).\]
For the lower bound, it is sufficient to show that
\[g_t(1)-O_t\le \frac{1}{1-x^R}(g_t(1)-V_t).\]
If $x^R\in K_t$, then $O_t=V_t$ and it holds. If $x^R\not\in K_t$, let $u$ be a real between the rightmost point of $K_t\cap\R$ and $x^R$, then, there exists some $\tilde{u}\in [u, x^R]$ such that
\[V_t-g_t(u)=g_t(x^R)-g_t(u)=g_t'(\tilde{u})(x^R-u).\]
There exists some $\tilde{v}\in [x^R,1]$ such that
\[g_t(1)-V_t=g_t(1)-g_t(x^R)=g_t'(\tilde{v})(1-x^R).\]
Note that $g_t'(y)\in [0,1]$ and $y\mapsto g_t'(y)$ is increasing, thus
\[V_t-g_t(u)\le g_t'(\tilde{u})x^R\le g_t'(\tilde{v})x^R=\frac{x^R}{1-x^R}(g_t(1)-V_t).\]
Let $u$ approach the rightmost point of $K_t\cap\R$, we have that
\[V_t-O_t\le \frac{x^R}{1-x^R}(g_t(1)-V_t).\]
This implies that, we always have
\[g_t(1)-O_t\le \frac{1}{1-x^R}(g_t(1)-V_t),\]
as desired.
\end{proof}

In \cite{LawlerMinkowskiSLERealLine}, the author has proved an estimate for $\SLE_{\kappa}(\rho)$ process which is closely related to Equation (\ref{onepoint}). We rephrase that result using our notations in the present setting.

\begin{lemma}\label{lawler}
Assume the same notations as in Lemma \ref{lem::martingale} with $\rho>(\kappa/2-4)\vee(-2)$ and $x^{R}\in[0^+, 1)$.
For $0<\epsilon<1/2$, we define
\begin{equation*}
\sigma(\epsilon):=\inf\left\{t\ge 0: \frac{g_{t}(1)-V_t}{g'_{t}(1)}\le\epsilon(1-x^{R})\right\},
\end{equation*}
then we have
\begin{equation}\label{conformalest}
\mathbb{P}[\sigma(\epsilon)<\infty]\asymp\epsilon^{\alpha}(1-x^{R})^{\beta},
\end{equation}
where $\alpha$ is the same as in Equation (\ref{eqn::exponent}) and $\beta>0$ is the same as in Lemma \ref{lem::martingale}, and the constants in $\asymp$ are uniform over all $x^{R}\in[0^+, 1)$. \end{lemma}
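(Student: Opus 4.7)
The plan is to run the local martingale $M_t = M_t^1$ of Lemma \ref{lem::martingale} between time $0$ and $\sigma := \sigma(\eps)$, and to read off the probability from its initial and terminal values. Setting $Z_t := g_t(1)-V_t$, $Y_t := g_t(1)-W_t$ and $R_t := Z_t/Y_t \in (0,1]$, one has $M_t = (g_t'(1)/Z_t)^\alpha R_t^\beta$. At $t=0$ the initial values give $M_0 = (1-x^R)^{\beta-\alpha}$, while by definition of $\sigma$ and continuity,
\[
M_\sigma \;=\; \bigl(\eps(1-x^R)\bigr)^{-\alpha}\, R_\sigma^{\beta}.
\]
Before $\sigma$ one has $Z_t/g_t'(1) > \eps(1-x^R)$, so combined with $R_t \le 1$ and $\alpha,\beta>0$, the process $M_{t\wedge\sigma}$ is a nonnegative local martingale uniformly bounded by $(\eps(1-x^R))^{-\alpha}$, hence a true bounded martingale. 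Optional stopping at $\sigma \wedge T$ and dominated convergence as $T\to\infty$ immediately produce
\[
\E\bigl[R_\sigma^{\beta}\,\mathbf{1}_{\sigma<\infty}\bigr]\;\le\; \eps^{\alpha}(1-x^R)^{\beta}.
\]

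To upgrade this one-sided bound to $\PP[\sigma<\infty]\asymp \eps^{\alpha}(1-x^R)^{\beta}$, the missing ingredient is that on $\{\sigma<\infty\}$ the ratio $R_\sigma$ is, with probability bounded below uniformly in $\eps$ and $x^R$, of order $1$. The standard way to achieve this is to tilt by $M_t/M_0$: after the natural time change $ds = dt/Y_t^2$, Girsanov applied to the SDE \eqref{Loewnereq} shows that under the weighted measure $\widetilde{\PP}$ the process $R_s$ becomes a radial Bessel-type diffusion on $(0,1]$ of dimension $1+2(\rho+2)/\kappa$, whose recurrence at the origin forces $\sigma<\infty$ almost surely under $\widetilde{\PP}$ and keeps $R_\sigma$ of order $1$ with positive $\widetilde{\PP}$-probability. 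Inverting the Radon--Nikodym derivative then yields both the matching lower bound and a removal of the factor $R_\sigma^\beta$ from the upper bound, giving the claimed two-sided estimate.

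The main obstacle is the uniformity in $x^R\in[0^+,1)$: at $x^R\to 0^+$ the initial state of the tilted Bessel-type process $R_s$ is degenerate (since $R_0 \to 1$), while at $x^R\to 1$ the relevant Koebe-type length scale shrinks to zero. Both endpoints are handled in Lawler \cite{LawlerMinkowskiSLERealLine} through careful scaling arguments and a comparison with Bessel processes started from arbitrarily small initial conditions. Our Lemma \ref{lawler} is a direct transcription of the one-point estimate proved there, with the exponents matching \eqref{eqn::exponent} and Lemma \ref{lem::martingale}; we therefore import the result without reproducing the full argument.
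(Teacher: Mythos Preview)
Your proposal is correct and ultimately takes the same route as the paper: both simply cite \cite[Proposition~5.4]{LawlerMinkowskiSLERealLine}. The paper's entire proof is this citation together with the remark that, although Lawler states the result only for $\kappa\in(0,8)$, the same computation goes through for $\kappa\ge 8$; you may want to include that caveat. Your added sketch of the martingale/optional-stopping/Girsanov mechanism is accurate and is indeed the skeleton of Lawler's argument, but note that the inequality $\E[R_\sigma^\beta\mathbf 1_{\sigma<\infty}]\le \eps^\alpha(1-x^R)^\beta$ you obtain is not yet either side of the desired bound on $\PP[\sigma<\infty]$ (since $R_\sigma^\beta\le 1$), so both directions genuinely require the tilted-measure analysis you describe.
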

\begin{proof}
\cite[Proposition 5.4]{LawlerMinkowskiSLERealLine}. Although \cite[Proposition 5.4]{LawlerMinkowskiSLERealLine} states this conclusion for $\kappa\in (0,8)$, the same calculation also works for $\kappa\ge 8$.
\end{proof}

\subsection{$\SLE_{\kappa}(\rho^L;\rho^R)$}
Analogously, we can define an SLE$_{\kappa}(\rho^{L}; \rho^{R})$ process with two force points $(x^{L}; x^{R})$ where $x^{L}\le 0\le x^{R}$. It is the Loewner chain driven by $W_{t}$ which is the solution to the following systems of SDEs:
\[dW_{t}=\sqrt{\kappa}dB_{t}+\frac{\rho^{L} dt}{W_{t}-V_{t}^{L}}+\frac{\rho^{R} dt}{W_{t}-V_{t}^{R}}, \quad W_{0}=0;\]
\[dV^{L}_{t}=\frac{2dt}{V^{L}_{t}-W_{t}}, \quad V^{L}_{0}=x^{L}; \quad dV^{R}_{t}=\frac{2dt}{V^{R}_{t}-W_{t}}, \quad V^{R}_{0}=x^{R}.\]

When $\rho^L>-2$ and $\rho^R>-2$, the solution exists for all times $t\ge 0$, and the corresponding Loewner chain is almost surely generated by a continuous curve which is almost surely transient (\cite[Section 2]{MillerSheffieldIG1}). 

\begin{lemma}\label{lem::sle_continuity_forcepoints}
Fix $\kappa>0, \rho^L>-2, \rho^R>-2$. Suppose that $(x^L_n)$ (resp. $(x^R_n)$) is a sequence of negative (resp. positive) real numbers converging to $x^L\le 0^-$ (resp. $x^R\ge 0^+$) as $n\to\infty$. For each $n$, suppose that $(W^n, V^{n,L}, V^{n,R})$ is the driving triple for $\SLE_{\kappa}(\rho^L;\rho^R)$ with force points $(x^L_n; x^R_n)$. Then $(W^n, V^{n,L}, V^{n,R})$ converges weakly in law with respect to the local uniform topology to the driving triple $(W, V^L, V^R)$ of $\SLE_{\kappa}(\rho^L;\rho^R)$ with force points $(x^L;x^R)$ as $n\to\infty$.
\end{lemma}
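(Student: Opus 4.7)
The strategy is the standard tightness--limit identification--uniqueness scheme for SDEs with singular drifts. I would couple all the driving triples $(W^n, V^{n,L}, V^{n,R})$ to a single Brownian motion $B$ on a common probability space, prove tightness of their laws in $C([0,T], \R^3)$ for every $T>0$, show that every subsequential weak limit solves the limit SDE with initial data $(0, x^L, x^R)$, and then conclude by pathwise uniqueness for that limit equation.

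For tightness, introduce the gap processes $U^{n,R}_t := V^{n,R}_t - W^n_t \ge 0$ and $U^{n,L}_t := W^n_t - V^{n,L}_t \ge 0$. It\^o's formula applied to $(U^{n,R/L})^2$ shows that $U^{n,R/L}$ is controlled by a Bessel-type process of dimension $\delta^{R/L} := 1 + 2(\rho^{R/L}+2)/\kappa > 1$ started at $|x^{R/L}_n|$, modulo a cross-term coming from the opposite force point that is dealt with by a Gronwall/bootstrap argument. Combined with the SDE for $W^n$ itself, this yields uniform-in-$n$ $L^p$ bounds on $\sup_{t \le T}|W^n_t|$ and $\sup_{t \le T}|V^{n,L/R}_t|$, as well as H\"older moduli inherited from the Brownian part. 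The Kolmogorov--Aldous criterion then gives tightness in the local uniform topology.

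By Prokhorov and Skorokhod representation one can extract a subsequence along which $(W^n, V^{n,L}, V^{n,R})$ converges almost surely, locally uniformly, to some $(\tilde W, \tilde V^L, \tilde V^R)$ starting at $(0, x^L, x^R)$. I would then identify this limit as the $\SLE_\kappa(\rho^L;\rho^R)$ driving triple with force points $(x^L;x^R)$ by passing to the limit in the integrated form of the SDE system. The only delicate step is the convergence of the drift integrals $\int_0^t ds/(W^n_s - V^{n,R/L}_s)$: since $\delta^{R/L} > 1$ these are a.s.\ finite, and uniform integrability is provided by the Bessel moment bounds, so the limits pass in probability. Pathwise uniqueness for the limit SDE (part of the well-posedness cited from \cite{MillerSheffieldIG1}) then forces every subsequential limit to coincide with the $\SLE_\kappa(\rho^L;\rho^R)$ driving triple, giving the desired weak convergence of the full sequence.

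The main obstacle is the degenerate case $|x^{L/R}_n| \to 0$, because in the limit the drift $\rho^{R/L}/(W_t - V^{R/L}_t)$ is genuinely singular at $t = 0$. I would treat this by a two-scale argument: first prove pathwise convergence on $[\epsilon, T]$ using ordinary continuous dependence on initial data for SDEs with locally Lipschitz coefficients (applicable because, with high probability, $U^{n,R/L}_t$ is uniformly bounded below on $[\epsilon, T]$), and then send $\epsilon \downarrow 0$ using the uniform estimate $\E \int_0^\epsilon ds / U^{n,R/L}_s = O(\sqrt{\epsilon})$, which follows from Bessel scaling together with boundedness of $|x^{R/L}_n|$. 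Once this endpoint behavior is controlled, the rest is routine.
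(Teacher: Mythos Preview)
The paper does not actually prove this lemma: its entire ``proof'' is a citation to \cite[Section~4.7]{LawlerConformallyInvariantProcesses}, treating the result as known. Your proposal, by contrast, sketches a self-contained argument via tightness, Skorokhod representation, and identification of subsequential limits through the integrated SDE. This is a genuinely different (and considerably more explicit) route.

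Your outline is sound, but two points deserve care if you carry it out. First, the cross-term $-\rho^L/U^{n,L}$ in the SDE for $U^{n,R}$ (and symmetrically) means that $U^{n,R}$ is \emph{not} dominated by a Bessel process when $\rho^L>0$; the ``Gronwall/bootstrap'' you invoke would have to exploit that $U^{n,L}+U^{n,R}=V^{n,R}-V^{n,L}$ is nondecreasing to keep the cross-term integrable near $t=0$, especially in the doubly degenerate case $x^L_n,x^R_n\to 0$. Second, you only need uniqueness \emph{in law} for the limiting driving triple to pin down the subsequential limits, not pathwise uniqueness; this is what the existence/uniqueness theory you cite actually provides, so you should phrase it that way. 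With those caveats, the strategy is correct and would yield a proof more detailed than what the paper supplies.
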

\begin{proof}
\cite[Section 4.7]{LawlerConformallyInvariantProcesses}.
\end{proof}

\begin{lemma}\label{lem::lawler_generalized}
Fix $\kappa>0, \rho^L>-2, \rho^R>(\kappa/2-4)\vee(-2)$. Let $\eta$ be an $\SLE_{\kappa}(\rho^L;\rho^R)$ process with force points $(x^L; x^R)$ where $x^L\le 0, x^R\in [0^+, 1)$. Let $(g_t, t\ge 0)$ be the family of conformal maps and $(W_t, V^L_t, V^R_t, t\ge 0)$ be the driving function.  
For $0<\epsilon<1/2$, we define
\begin{equation*}
\sigma(\epsilon):=\inf\left\{t\ge 0: \frac{g_{t}(1)-V^R_t}{g'_{t}(1)}\le\epsilon(1-x^{R})\right\},
\end{equation*}
then we have
\begin{equation}\label{conformalest}
\mathbb{P}[\sigma(\epsilon)<\infty]\asymp\epsilon^{\alpha}(1-x^{R})^{\beta},
\end{equation}
where $\alpha$ is the same as in Equation (\ref{eqn::exponent}) with $\rho=\rho^R$ and $\beta>0$ is the same as in Lemma \ref{lem::martingale} with $\rho=\rho^R$, and the constants in $\asymp$ are uniform over all $x^{R}\in[0^+, 1)$, $x^{L}\le 0$. \end{lemma}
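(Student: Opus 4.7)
The strategy is to reduce this estimate to Lemma \ref{lawler} via an absolute continuity comparison between $\SLE_{\kappa}(\rho^L;\rho^R)$ and the single-force-point process $\SLE_{\kappa}(\rho^R)$. The heuristic is that the left force point $x^L\le 0$ lies on the opposite side of the origin from the target point $1$, so its presence should only perturb the probability $\PP[\sigma(\epsilon)<\infty]$ by a bounded multiplicative factor.

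Assume first that $x^L<0$ strictly. By the coordinate-change framework of \cite{SchrammWilsonSLECoordinatechanges}, the law of $\SLE_{\kappa}(\rho^L;\rho^R)$ is absolutely continuous with respect to the law of $\SLE_{\kappa}(\rho^R)$ (with the same right force point $x^R$) when restricted to events measurable prior to the swallowing of $x^L$, with Radon--Nikodym derivative a normalized local martingale of the form
\[
\mathcal{N}_t/\mathcal{N}_0, \qquad \mathcal{N}_t=(W_t-V^L_t)^a\, g_t'(x^L)^b,
\]
for explicit exponents $a,b$ depending only on $\kappa$ and $\rho^L$. Since $\rho^L>-2$ the force point $x^L$ is never swallowed under $\PP$, so this yields
\[
\PP[\sigma(\epsilon)<\infty]=\E^{\rho^R}\!\left[\mathcal{N}_{\sigma(\epsilon)}/\mathcal{N}_0;\ \sigma(\epsilon)<\infty,\ \sigma(\epsilon)<\tau_L'\right]
\]
plus a controllable remainder (where $\tau_L'$ is the swallowing time of $x^L$ under the single-force-point law). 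It thus suffices to show $\mathcal{N}_{\sigma(\epsilon)}/\mathcal{N}_0\asymp 1$ uniformly in $x^L<0$, $x^R\in[0^+,1)$, and $\epsilon\in(0,1/2)$ on this event, and invoke Lemma \ref{lawler}.

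For the uniform comparison, the key observation is that $\sigma(\epsilon)$ is defined entirely in terms of the behavior of the curve near $1$, while $x^L\le 0$ lies at unit distance on the opposite side of the origin. Using a standard derivative distortion estimate that bounds $g_t'(x^L)$ and $W_t-V^L_t$ in terms of the half-plane capacity and diameter of the hull $K_t$, combined with the hull-capacity control already employed in \cite[Section 5]{LawlerMinkowskiSLERealLine}, one shows that $W_{\sigma(\epsilon)}-V^L_{\sigma(\epsilon)}$ and $g'_{\sigma(\epsilon)}(x^L)$ remain comparable (with uniform constants) to their initial values $|x^L|$ and $1$, respectively, giving $\mathcal{N}_{\sigma(\epsilon)}/\mathcal{N}_0\asymp 1$.

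The main technical obstacle is ensuring uniformity of this comparison as $|x^L|\to 0$: one must rule out the hull growing disproportionately on the left side and dragging $V^L_t$ or $g_t'(x^L)$ in a way that breaks the ratio $\mathcal{N}_{\sigma(\epsilon)}/\mathcal{N}_0$. The remaining boundary case $x^L=0^-$ is then recovered via Lemma \ref{lem::sle_continuity_forcepoints}: taking a sequence $x^L_n\to 0^-$, the driving triples $(W^n,V^{n,L},V^{n,R})$ converge weakly in the local uniform topology, and after truncating $\sigma(\epsilon)$ at a large finite time the probability in question is continuous in the driving data, so the uniform estimate passes to the limit with constants preserved.
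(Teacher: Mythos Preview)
The paper's proof is entirely different and essentially one line: it says that $\PP[\sigma(\epsilon)<\infty]$ is continuous in $x^L$ (by Lemma~\ref{lem::sle_continuity_forcepoints}) and then combines this with Lemma~\ref{lawler}. The implicit point is that the calculation behind Lemma~\ref{lawler} (namely \cite[Proposition~5.4]{LawlerMinkowskiSLERealLine}) only uses the driving pair $(W_t,V^R_t)$ and the associated martingale involving the right force point; adding a left force point does not change that argument, so the estimate with uniform constants follows directly, and continuity is invoked merely to cover the degenerate location $x^L=0^-$. Your absolute-continuity route is a genuinely different strategy.

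However, your argument has a real gap at exactly the place you flag as the ``main technical obstacle.'' The claim that $W_{\sigma(\epsilon)}-V^L_{\sigma(\epsilon)}$ stays comparable to its initial value $|x^L|$ is false. By the definition of $\sigma(\epsilon)$ the hull has already approached the point $1$, so it has macroscopic diameter; in particular $W_{\sigma(\epsilon)}-V^L_{\sigma(\epsilon)}\ge W_{\sigma(\epsilon)}-x^L$ is typically of order $1$, not of order $|x^L|$. Consequently $\mathcal N_{\sigma(\epsilon)}/\mathcal N_0\sim |x^L|^{-a}$ (with $a=\rho^L/\kappa$), which blows up or collapses as $x^L\to 0^-$ depending on the sign of $\rho^L$, and the $\asymp$ with uniform constants cannot be extracted this way. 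Your appeal to Lemma~\ref{lem::sle_continuity_forcepoints} at the end only treats the single limit point $x^L=0^-$; it does not repair the lack of uniformity for $x^L$ in a neighborhood of $0$, which is precisely what the statement demands. There is also a second loose end: under the reference measure $\PP^{\rho^R}$ (no left force point) the point $x^L$ can be swallowed before $\sigma(\epsilon)$ when $\kappa>4$, and you dismiss this as a ``controllable remainder'' without saying how to control it; since $\mathcal N_t$ is only a local martingale up to that swallowing time, this is not automatic.
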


\begin{proof}
By Lemma \ref{lem::sle_continuity_forcepoints}, we know that the probability $\PP[\sigma(\eps)<\infty]$ is continuous in $x^L$. 
Combining with Lemma \ref{lawler}, we obtain the conclusion. 
\end{proof}

\section{Proof of Theorem \ref{thm::assumption}}\label{sec::onepoint}
In this section, we fix $\kappa>0$ and $\rho>(\kappa/2-4)\vee(-2)$. Let $\eta$ be an $\SLE_{\kappa}(\rho)$ process with a single force point located at $x^{R}\in[0^+,1/2]$. Let $(g_t, t\ge 0)$ be the family of conformal maps and $(W_t, V_t, t\ge 0)$ be the driving function. Define $f_t:=g_t-W_t$ to be the centered conformal map.
\begin{proof} [Proof of Equation (\ref{onepoint})]
From Lemma \ref{lem::mart_dist}, we have that
\[\frac{1-x^R}{4}\dist\left(1, \eta([0,t])\right)\le \frac{g_{t}(1)-V_t}{g'_{t}(1)}\le 4 \dist\left(1, \eta([0,t])\right).\]
Without loss of generality, we may assume $0<\epsilon<1/16$. By Equation (\ref{lawler}), we have, uniform over $x^R\in [0^+, 1/2]$,
\[
\PP\left[\dist(1, \eta)\le \eps\right]\le \PP[\sigma(4\epsilon/(1-x^R))<\infty]\asymp\epsilon^{\alpha},\quad 
\PP\left[\dist(1, \eta)\le \eps\right]\ge \PP[\sigma(\eps/4)<\infty]\asymp \eps^{\alpha}.\]
\end{proof}

We can also prove a conditional version of One-Point Estimate.
\begin{lemma}\label{lem::onepoint_conditional}
Suppose that $T$ is any stopping time with $\dist(1, \eta([0,T]))\ge 2\eps$, then we have the upper bound
\begin{equation}\label{eqn::one_conditional_upper}
\PP[\dist(1, \eta)\le \eps\cond \eta([0,T])]\lesssim \left(\frac{\eps}{\dist(1, \eta([0,T]))}\right)^{\alpha},\end{equation}
where the constant in $\lesssim$ is uniform over $x^R\in [0^+,1/2]$ and is independent of $T$.

Moreover, if we assume that $V_T-W_T\le g_T(1)-V_T$, we also have the lower bound
\begin{equation}
\PP[\dist(1, \eta)\le \eps\cond \eta([0,T])]\gtrsim \left(\frac{\eps}{\dist(1, \eta([0,T]))}\right)^{\alpha},
\end{equation}
where the constant in $\gtrsim$ is uniform over $x^R\in [0^+,1/2]$ and is independent of $T$ as long as $V_T-W_T\le g_T(1)-V_T$.
\end{lemma}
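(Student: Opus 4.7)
The plan is to reduce to the unconditional Lawler-type estimate by the Domain Markov Property and scaling invariance. Write $a := g_T(1) - W_T$, $b := V_T - W_T$, and $D := \dist(1, \eta([0,T]))$. Since $1$ has not been swallowed at time $T$ (as $D \ge 2\eps > 0$) we have $b \in [0, a)$, and Lemma~\ref{lem::mart_dist} together with $x^R \le 1/2$ gives $D \asymp (a-b)/g_T'(1)$. Conditionally on $\eta([0,T])$, the Domain Markov Property yields that $\tilde{\eta}(t) := f_T(\eta(t+T))$ is an $\SLE_{\kappa}(\rho)$ with force point $b$; by scaling invariance, $\hat{\eta}(s) := \tilde{\eta}(a^2 s)/a$ is then an $\SLE_{\kappa}(\rho)$ with force point $\hat{x}^R := b/a \in [0^+, 1)$.

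Next I transport the event $\{\dist(1, \eta) \le \eps\}$ to an event for $\hat{\eta}$ at the point $1$. Applying Lemma~\ref{lem::mart_dist} to $\eta$ at times $t + T$ identifies this event, up to absolute constants, with the existence of $t \ge 0$ such that $(g_{t+T}(1) - V_{t+T})/g_{t+T}'(1) \lesssim \eps$. The Loewner chain-rule identities $g_{t+T}(1) - V_{t+T} = \tilde{g}_t(a) - \tilde{V}_t$ and $g_{t+T}'(1) = g_T'(1) \tilde{g}_t'(a)$, followed by the $a$-rescaling, convert this into the condition $(\hat{g}_s(1) - \hat{V}_s)/\hat{g}_s'(1) \lesssim \eps g_T'(1)/a$ for some $s \ge 0$, i.e.\ to $\hat{\sigma}(\nu) < \infty$ with $\nu \asymp \eps g_T'(1)/(a-b) \asymp \eps/D$. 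Applying Lemma~\ref{lawler} to $\hat{\eta}$ (which is valid for $\nu < 1/2$, and one may assume $\eps/D$ is small enough for this, since otherwise both sides of the claimed bound are of order $1$) yields
\[\PP[\dist(1, \eta) \le \eps \cond \eta([0,T])] \asymp \left(\frac{\eps}{D}\right)^{\alpha} \left(\frac{a-b}{a}\right)^{\beta}.\]

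The upper bound then follows at once from $(a-b)/a \le 1$ and $\beta > 0$. For the lower bound, the additional hypothesis $V_T - W_T \le g_T(1) - V_T$ is exactly $b \le a/2$, equivalently $(a-b)/a \ge 1/2$, which gives $((a-b)/a)^{\beta} \ge 2^{-\beta}$, a positive constant independent of $T$. The main technical care I expect is in the chain-rule bookkeeping, in ensuring that the rescaled force point $b/a$ remains in the uniformity range of Lemma~\ref{lawler}, and in handling the borderline case where $\eps/D$ fails to be small separately via trivial bounds. None of these are substantive obstacles beyond what already appears in the proof of the unconditional estimate~(\ref{onepoint}).
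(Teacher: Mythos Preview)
Your argument is correct and follows essentially the same route as the paper: condition via the Domain Markov Property, rescale, and invoke Lemma~\ref{lawler} for the renormalized curve. The only cosmetic difference is that the paper transports the ball $B(1,\eps)$ under $f_T$ via an explicit Koebe $1/4$ argument before applying Lemma~\ref{lem::mart_dist} to the rescaled curve $\tilde{\eta}$, whereas you apply Lemma~\ref{lem::mart_dist} directly to $\eta$ at times $t+T$ and then use the Loewner chain-rule identities; both reductions land on the same $\hat{\sigma}(\nu)$ with $\nu\asymp\eps/D$, and the factor $((a-b)/a)^{\beta}$ is handled identically in both proofs.
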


\begin{proof}
Let $O_t$ be the image of the rightmost point of $K_t\cap \R$ under $g_t$.
Without loss of generality, we may assume that
\[\frac{\epsilon}{\dist\left(1,\eta([0,T])\right)}\le 2^{-8}.\]
By Koebe 1/4 Theorem, we have
\[\frac{g_{T}(1)-O_{T}}{g'_{T}(1)}\ge
\frac{\dist(1, \eta([0,T]))}{4}\ge 4\epsilon, \]
so that, $f_T^{-1}$ is well-defined in the ball with center $f_T(1)$ and radius $4\eps g_T'(1)$, and Koebe 1/4 Theorem implies that the image of the ball $B(1,\eps)$ under $f_T$ is contained in the ball with center $f_T(1)$ and radius $4\eps g_T'(1)$.
Apply Koebe 1/4 Theorem to $f_T$, we have that the image of the ball $B(1,\eps)$ under $f_T$ contains the ball with center $f_T(1)$ and radius $\eps g_T'(1)/4$.
Define, for $t\ge 0$,
\[\tilde{\eta}(t):=f_T(\eta(t+T))/f_T(1).\]
Then, Domain Markov Property implies that $\tilde{\eta}$ has the same law as $\SLE_{\kappa}(\rho)$ with force point located at $(V_T-W_T)/f_T(1)$. Given $\eta([0,T])$, we have that
\[\left[\dist(1,\tilde{\eta})\le \frac{\eps g_T'(1)}{4f_T(1)}\right]\subset [\dist(1,\eta)\le \eps]\subset \left[\dist(1,\tilde{\eta})\le \frac{4\eps g_T'(1)}{f_T(1)}\right].\]
Let $\tilde{\sigma}$ be the stopping time for $\tilde{\eta}$ that is defined in the same way as in Lemma \ref{lawler}.
Then we have that
\begin{align*}
\PP[\dist(1,\eta)\le\eps\cond \eta([0,T])]&\le \PP\left[\dist(1,\tilde{\eta})\le \frac{4\eps g_T'(1)}{f_T(1)}\cond \eta([0,T])\right]\\
&\le \PP\left[\tilde{\sigma}\left(\frac{16\eps g_T'(1)}{g_T(1)-V_T}\right)<\infty\cond \eta([0,T])\right]\tag{\text{Apply Lemma \ref{lem::mart_dist} to $\tilde{\eta}$}}\\
&\asymp \left(\frac{\eps g_T'(1)}{g_T(1)-V_T}\right)^{\alpha}\left(\frac{g_T(1)-V_T}{f_T(1)}\right)^{\beta}\tag{\text{Apply Lemma \ref{lawler} to $\tilde{\eta}$}}\\
&\le \left(\frac{\eps g_T'(1)}{g_T(1)-V_T}\right)^{\alpha},
\end{align*}
\begin{align*}
\PP[\dist(1,\eta)\le\eps\cond \eta([0,T])]&\ge \PP\left[\dist(1,\tilde{\eta})\le \frac{\eps g_T'(1)}{4f_T(1)}\cond \eta([0,T])\right]\\
&\ge \PP\left[\tilde{\sigma}\left(\frac{\eps g_T'(1)}{16f_T(1)}\right)<\infty\cond \eta([0,T])\right]\tag{\text{Apply Lemma \ref{lem::mart_dist} to $\tilde{\eta}$}}\\
&\asymp \left(\frac{\eps g_T'(1)}{f_T(1)}\right)^{\alpha}\left(\frac{g_T(1)-V_T}{f_T(1)}\right)^{\beta}\tag{\text{Apply Lemma \ref{lawler} to $\tilde{\eta}$}}\\
&= \left(\frac{\eps g_T'(1)}{g_T(1)-V_T}\right)^{\alpha}\left(\frac{g_T(1)-V_T}{f_T(1)}\right)^{\beta+\alpha},
\end{align*}
where the constants in $\asymp$ are uniform over $x^R\in [0^+, 1)$ and are independent of $T$.
From Lemma \ref{lem::mart_dist}, we have that
\[\dist(1,\eta([0,T]))\asymp \frac{g_T(1)-V_T}{g_T'(1)},\]
where the constants in $\asymp$ are uniform over $x^R\in [0^+, 1/2]$. This implies the upper bound.

For the lower bound, under the assumption that $V_T-W_T\le g_T(1)-V_T$, we have
\[\frac{1}{2}\le \frac{g_T(1)-V_T}{f_T(1)}\le 1,\]
which implies the lower bound.
\end{proof}

\begin{lemma}\label{lem::rare_event}
For $\epsilon, \delta, x>0$, we define stopping times
\[T:=\inf\{t\ge 0: \dist(1, \eta([0,t]))\le \eps\},\quad S:=\inf\{t\ge 0: \dist(1+x, \eta([0,t]))\le \delta\}.\]
Define
\[\tilde{\alpha}:=\alpha(\kappa, \rho+2)=\frac{(\rho+4)(\rho+6-\kappa/2)}{\kappa}.\]
We have that
\[\PP[S<T<\infty]\lesssim \eps^{\alpha}\delta^{\tilde{\alpha}}x^{-\tilde{\alpha}}\]
where the constant in $\lesssim$ is uniform over $x^R\in [0^+,1/2]$ and $x>0$.
\end{lemma}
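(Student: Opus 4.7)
The plan is to apply the Domain Markov property at time $S$ and combine the conditional one-point estimate (Lemma \ref{lem::onepoint_conditional}) with a two-point martingale argument. On $\{S<T\}$ one has $\dist(1,\eta([0,S]))>\eps$, so Lemma \ref{lem::onepoint_conditional} (combined with the trivial bound $\PP\le 1$, which dominates $(\eps/\dist)^{\alpha}$ when $\dist\le 2\eps$) yields $\PP[T<\infty\cond\mathcal{F}_S]\lesssim (\eps/\dist(1,\eta([0,S])))^{\alpha}$, and consequently
\[\PP[S<T<\infty]\lesssim \eps^{\alpha}\,\E\bigl[\mathbf{1}_{S<\infty}/\dist(1,\eta([0,S]))^{\alpha}\bigr].\]

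The heart of the proof is then to establish the weighted-expectation bound
\[\E\bigl[\mathbf{1}_{S<\infty}/\dist(1,\eta([0,S]))^{\alpha}\bigr]\lesssim \delta^{\tilde\alpha}/x^{\tilde\alpha},\]
where the improved exponent $\tilde\alpha=\alpha(\kappa,\rho+2)$ reflects that when $\eta$ approaches $1+x$ first, the conformal pinch there acts like an additional right-side force of effective weight $2$, so the relevant boundary parameter becomes $\rho+2$. I would make this rigorous by constructing a two-point Schramm--Wilson martingale
\[\mathcal{M}_t=\prod_{y\in\{1,\,1+x\}}\biggl(\frac{g_t'(y)}{g_t(y)-V_t}\biggr)^{a_y}\biggl(\frac{g_t(y)-V_t}{g_t(y)-W_t}\biggr)^{b_y}\cdot\bigl(g_t(1+x)-g_t(1)\bigr)^{c},\]
with exponents $(a_y,b_y,c)$ determined by the requirement that $\mathcal{M}_t$ be a local martingale (via It\^o's formula applied to the Loewner flow) and by matching the asymptotics $\mathcal{M}_0\asymp x^{-\tilde\alpha}$ and $\mathcal{M}_S\asymp \delta^{-\alpha}\,\dist(1,\eta([0,S]))^{-\alpha}$ (the latter via Lemma \ref{lem::mart_dist} applied at both $1$ and $1+x$, together with the bound $f_S(1)\le f_S(1+x)\lesssim \delta\, g_S'(1+x)$ controlling the cross-ratio term). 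Optional stopping then gives $\E[\mathcal{M}_S\mathbf{1}_{S<\infty}]=\mathcal{M}_0\asymp x^{-\tilde\alpha}$, which yields the weighted-expectation bound.

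The main obstacle is solving for $(a_y,b_y,c)$ so that both the martingale condition and the two asymptotic requirements hold simultaneously, and verifying that all constants are uniform in $x^R\in[0^+,1/2]$ and $x>0$. The Schramm--Wilson two-point calculation is standard in the style of Lemma \ref{lem::martingale} (see also the computations in \cite{LawlerMinkowskiSLERealLine}), but the identification $a_{1+x}+b_{1+x}-c=\tilde\alpha$ needed for the initial asymptotic is precisely where the exponent $\tilde\alpha=\alpha(\kappa,\rho+2)$ emerges, and keeping track of the lower-order $(g_t(y)-V_t)/(g_t(y)-W_t)$ factors uniformly across the range $x^R\in[0^+,1/2]$ will require some care.
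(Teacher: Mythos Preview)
Your opening step matches the paper's: condition on $\eta([0,S])$ on $\{S<T\}$ and use the conditional one-point bound to reduce to estimating $\E[\mathbf{1}_{S<\infty}M_S]$, where $M_S\gtrsim \dist(1,\eta([0,S]))^{-\alpha}$ (here $M$ is the one-point martingale of Lemma~\ref{lem::martingale} at the point $1$; the bound $(g_S(1)-V_S)/(g_S(1)-W_S)\le 1$ and $\beta>0$ give this inequality).

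For the weighted expectation, however, the paper does \emph{not} build a bespoke two-point martingale. Instead it observes, via Schramm--Wilson, that reweighting $\eta$ by $M=M^{1}$ and pushing forward by the M\"obius map $\phi(z)=xz/((1+x)(1-z))$ sending $(0,1,1+x)\mapsto(0,\infty,-1)$ turns $\eta$ into an $\SLE_\kappa(\rho+2;\rho)$ process $\eta^*$ with force points at $(\phi(\infty);\phi(x^R))$. Then
\[
\E[M_S\mathbf{1}_{S<T}]\le M_0\,\PP^*\!\bigl[\dist(-1,\eta^*)\le 4\delta/((1+x)x)\bigr],
\]
and the right side is a \emph{one-point} boundary estimate for $\SLE_\kappa(\rho+2;\rho)$, already available as Lemma~\ref{lem::lawler_generalized}, which immediately yields the exponent $\tilde\alpha=\alpha(\kappa,\rho+2)$. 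This is shorter and avoids solving for any new exponents.

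Your direct two-point martingale route is in principle equivalent (the product of the two Schramm--Wilson tilts collapses to exactly such an object), but your sketch has a concrete gap: the asserted asymptotic $\mathcal{M}_S\asymp\delta^{-\alpha}\dist(1,\eta([0,S]))^{-\alpha}$ carries the wrong power of $\delta$. With that exponent, optional stopping would give $\E[\mathbf{1}_{S<\infty}/\dist^{\alpha}]\lesssim \delta^{\alpha}x^{-\tilde\alpha}$, not $\delta^{\tilde\alpha}x^{-\tilde\alpha}$. To repair this you would need $a_{1+x}$ (together with the contribution from the cross term $(g_t(1+x)-g_t(1))^c$ at time $S$) to produce $\delta^{-\tilde\alpha}$, and you have not shown that a choice of $(a_y,b_y,c)$ achieves this while simultaneously keeping $\mathcal{M}$ a local martingale and giving $\mathcal{M}_0\asymp x^{-\tilde\alpha}$. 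The paper's M\"obius/change-of-measure argument sidesteps this entire computation by reducing to an already-proved one-point estimate for the $\rho+2$ process.
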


\begin{proof}
Recall that $\alpha$ is defined in Equation (\ref{eqn::exponent}) and $\beta$ is defined in Lemma \ref{lem::martingale}. Define
\[M_t:=\left(\frac{g_t'(1)}{g_t(1)-V_t}\right)^{\alpha}\left(\frac{g_t(1)-V_t}{g_t(1)-W_t}\right)^{\beta}.\]

By \cite[Theorem 6]{SchrammWilsonSLECoordinatechanges}, we know that $(M_t)_{t\ge 0}$ is a local martingale. Let $\phi(z)=xz/(1+x)(1-z)$ be the M\"{o}bius transformation of the upper half plane such that it maps the triple $(0,1,1+x)$ to $(0,\infty,-1)$. Let $\eta^*$ be an $\SLE_{\kappa}(\rho+2;\rho)$ process with force points $(\phi(\infty);\phi(x^R))$, and we denote by $\PP^*$ its law. From \cite{SchrammWilsonSLECoordinatechanges}, we also know that the law of the image of $\eta$ under $\phi$ weighted by $M$ is the same as $\PP^*$. Thus, we have
\begin{align*}
\PP[S<T<\infty]&=\E\left[1_{[S<T]}\PP[T<\infty\cond \eta([0,S])]\right]\\
&\lesssim\eps^{\alpha}\E\left[M_S 1_{[S<T]}\right]\tag{\text{By Lemmas \ref{lem::mart_dist} and \ref{lawler}}}\\
&\le \eps^{\alpha}M_0\PP^*\left[\dist(-1,\eta^*)\le \frac{4\delta}{(1+x)x}\right]\\
&\asymp \eps^{\alpha}\left(\frac{\delta}{x}\right)^{\tilde{\alpha}}\left(\frac{1}{1+x}\right)^{\tilde{\beta}}\tag{\text{Apply Lemma \ref{lem::lawler_generalized} to $\eta^*$}}\\
&\lesssim \eps^{\alpha}\delta^{\tilde{\alpha}}x^{-\tilde{\alpha}},
\end{align*} 
where $\tilde{\beta}:=(12+2\rho-\kappa)/\kappa$.
\end{proof}

\begin{proof}[Proof of Equation (\ref{twopoints})]
Without loss of generality, we may assume that $0<\epsilon,\delta\le x/16$, $\epsilon<1/16$, and $\delta=2^{-n}$ for some integer $n$. Let $r$ be the integer such that $2^{-r}\le x< 2^{-r+1}$ and by assumption we have $n>r+3$. Define, for $k\in\N$,
\[T:=\inf\{t\ge 0: \dist(1, \eta([0,t]))\le \eps\},\quad S_k:=\inf\{t\ge 0: \dist(1+x, \eta([0,t]))\le 2^{-k}\}.\]
Note that
\[[\dist(1,\eta)\le\eps, \dist(1+x, \eta)\le\delta]=[T<\infty, S_n<\infty].\]
We decompose this event into a union of disjoint events and estimate the probability one by one:
\[[T<\infty, S_n<\infty]=[S_n<T<\infty]\cup_{k=r+3}^{n-1}[S_k<T<S_{k+1}, S_n<\infty]\cup [T<S_{r+3}<S_n<\infty].\]
\medbreak
First, by Lemma \ref{lem::rare_event}, we have
\[\PP[S_n<T<\infty]\lesssim \eps^{\alpha}\delta^{\tilde{\alpha}}x^{-\tilde{\alpha}}\lesssim \eps^{\alpha}\delta^{\alpha}x^{-\alpha}.\]
\medbreak
Second, we estimate $\PP[S_k<T<S_{k+1}, S_n<\infty]$.
\begin{align*}
\PP[S_k<T<S_{k+1}, S_n<\infty]&=\E[1_{[S_k<T<S_{k+1}<\infty]}\E[1_{[S_n<\infty]}\cond \eta([0,S_{k+1}])]]\\
&\lesssim \PP[S_k<T<S_{k+1}<\infty]\delta^{\alpha}2^{k\alpha}\tag{\text{By Equation (\ref{eqn::one_conditional_upper})}}\\
&\le \PP[S_k<T<\infty]\delta^{\alpha}2^{k\alpha}\\
&\lesssim \eps^{\alpha}\delta^{\alpha}2^{-k(\tilde{\alpha}-\alpha)}x^{-\tilde{\alpha}} \tag{\text{By Lemma \ref{lem::rare_event}}} .
\end{align*}
Summing over $k$, we have (note that $\tilde{\alpha}>\alpha$)
\[\PP[\cup_{k=r+3}^{n-1}[S_k<T<S_{k+1}, S_n<\infty]]\le \sum_{k=r+3}^{n-1} \eps^{\alpha}\delta^{\alpha}2^{-k(\tilde{\alpha}-\alpha)}x^{-\tilde{\alpha}}\lesssim \eps^{\alpha}\delta^{\alpha} x^{\tilde{\alpha}-\alpha}x^{-\tilde{\alpha}}=\eps^{\alpha}\delta^{\alpha}x^{-\alpha}.\]
\medbreak
Finally, we estimate $\PP[T<S_{r+3}<S_n<\infty]$.
\begin{align*}
\PP[T<S_{r+3}<S_n<\infty]&=\E[1_{[T<S_{r+3}<\infty]}\E[1_{[S_n<\infty]}\cond \eta([0, S_{r+3}])]]\\
&\lesssim \PP[T<S_{r+3}<\infty]\delta^{\alpha}2^{r\alpha}\tag{\text{By Equation (\ref{eqn::one_conditional_upper})}}\\
&\le \PP[T<\infty]\delta^{\alpha}2^{r\alpha}\\
&\asymp \eps^{\alpha}\delta^{\alpha}2^{r\alpha}\tag{\text{By Equation (\ref{onepoint})}}\\
&\asymp \eps^{\alpha}\delta^{\alpha}x^{-\alpha}.
\end{align*}
\end{proof}

We can also prove a conditional version of Two-Point Estimate.
\begin{lemma}\label{lem::twopoint_conditional}
Suppose that $T$ is any stopping time such that $V_T-W_T\le g_T(1)-V_T$, $\dist(1, \eta([0,T]))>16\epsilon$, and $\dist(1+x, \eta([0,T]))>16\delta$, then we have that
\[\PP[\dist(1,\eta)\le \eps, \dist(1+x, \eta)\le \delta\cond \eta([0,T])]\lesssim \left(\frac{g_T'(1)g_T'(1+x)\eps\delta}{f_T(1)(f_T(1+x)-f_T(1))}\right)^{\alpha},\]
where the constant in $\lesssim$ is uniform over $x^R\in [0^+, 1/2]$ and is independent of $T$ as long as $V_T-W_T\le g_T(1)-V_T$, $\dist(1, \eta([0,T]))>16\epsilon$, and $\dist(1+x, \eta([0,T]))>16\delta$.
\end{lemma}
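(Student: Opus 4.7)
The plan is to condition on $\eta([0,T])$ and reduce to the unconditional Two-Point Estimate (\ref{twopoints}), mirroring the strategy used to prove the upper bound in Lemma \ref{lem::onepoint_conditional}. First I would introduce the rescaled image curve
\[\tilde{\eta}(t) := f_T(\eta(t+T))/f_T(1).\]
By the Domain Markov Property together with Scaling Invariance, given $\eta([0,T])$ this is an $\SLE_{\kappa}(\rho)$ process with single force point at $(V_T - W_T)/f_T(1)$. The hypothesis $V_T - W_T \le g_T(1) - V_T$ yields
\[f_T(1) = (g_T(1) - V_T) + (V_T - W_T) \ge 2(V_T - W_T),\]
so the force point of $\tilde{\eta}$ lies in $[0^+, 1/2]$, placing us exactly in the regime where the constant in (\ref{twopoints}) is uniform.

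Next, I would transfer the two small-distance events to corresponding events for $\tilde{\eta}$ via Koebe distortion estimates. Since $\dist(1, \eta([0,T])) > 16\epsilon$ and $\dist(1+x, \eta([0,T])) > 16\delta$, applying Schwarz reflection to $g_T$ across $\R$ exactly as in the proof of Lemma \ref{lem::mart_dist} and then Koebe $1/4$ Theorem yields
\[f_T(B(1,\epsilon)) \subset B\bigl(f_T(1),\, 4\epsilon g_T'(1)\bigr), \qquad f_T(B(1+x,\delta)) \subset B\bigl(f_T(1+x),\, 4\delta g_T'(1+x)\bigr),\]
in the same manner as the upper-bound calculation in Lemma \ref{lem::onepoint_conditional}. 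Dividing by $f_T(1)$ and writing $y := f_T(1+x)/f_T(1) > 1$, these inclusions give
\[[\dist(1,\eta) \le \epsilon] \subset \bigl[\dist(1, \tilde\eta) \le 4\epsilon g_T'(1)/f_T(1)\bigr], \quad [\dist(1+x,\eta) \le \delta] \subset \bigl[\dist(y, \tilde\eta) \le 4\delta g_T'(1+x)/f_T(1)\bigr].\]

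Finally I would apply the unconditional Two-Point Estimate (\ref{twopoints}) to $\tilde{\eta}$ at the two real points $1$ and $y = 1 + (y-1)$, with parameters $\epsilon' := 4\epsilon g_T'(1)/f_T(1)$, $\delta' := 4\delta g_T'(1+x)/f_T(1)$, and $x' := y - 1 = (f_T(1+x) - f_T(1))/f_T(1)$. This yields
\[\PP[\dist(1,\eta)\le \epsilon,\, \dist(1+x,\eta)\le \delta \mid \eta([0,T])] \lesssim (\epsilon')^{\alpha} (\delta')^{\alpha} (x')^{-\alpha},\]
and substituting back produces precisely the claimed bound. The main (mild) subtlety is keeping the Koebe distortion bounds clean in the presence of the hull $K_T$, which is handled by Schwarz-reflecting $g_T$ across the real line just as in Lemma \ref{lem::mart_dist}; another point worth noting is that when $\epsilon'$ or $\delta'$ is not smaller than $x'$ (or $1$), the right-hand side of (\ref{twopoints}) becomes of order $1$ or larger and the inequality is trivial since conditional probabilities are bounded by $1$.
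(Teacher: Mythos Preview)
Your proposal is correct and follows essentially the same approach as the paper: conditioning on $\eta([0,T])$, passing to the rescaled curve $\tilde\eta(t)=f_T(\eta(t+T))/f_T(1)$, using $V_T-W_T\le g_T(1)-V_T$ to place the force point of $\tilde\eta$ in $[0^+,1/2]$, applying Koebe $1/4$ to bound the images of the two balls, and then invoking the unconditional Two-Point Estimate~(\ref{twopoints}) for $\tilde\eta$ at the points $1$ and $f_T(1+x)/f_T(1)$. Your added remark about the trivial case when $\epsilon'$ or $\delta'$ is not small relative to $x'$ is a helpful clarification the paper leaves implicit.
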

\begin{proof}
Assume the same notations as in the proof of Lemma \ref{lem::onepoint_conditional}.
Given $\eta([0,T])$, the event $[\dist(1,\eta)\le \eps, \dist(1+x, \eta)\le\delta]$ implies that
\[\left[\dist(1,\tilde{\eta})\le \frac{4\eps g_T'(1)}{f_T(1)}, \quad \dist(f_T(1+x)/f_T(1),  \tilde{\eta})\le \frac{4\delta
g_T'(1+x)}{f_T(1)}\right].\]
Under the assumption $V_T-W_T\le g_T(1)-V_T$, the location of the force point of $\tilde{\eta}$ is
\[\frac{V_T-W_T}{g_T(1)-W_T}\in [0,1/2].\]
Thus, we could apply Equation (\ref{twopoints}) to $\tilde{\eta}$. Therefore, we have
\begin{eqnarray*}
\lefteqn{\PP[\dist(1,\eta)\le \eps, \dist(1+x, \eta)\le \delta\cond \eta([0,T])]}\\
&\le &\PP \left[\dist(1,\tilde{\eta})\le \frac{4\eps g_T'(1)}{f_T(1)}, \ \dist(f_T(1+x)/f_T(1), \tilde{\eta})\le \frac{4\delta g_T'(1+x)}{f_T(1)}\cond \eta([0,T])\right]\\
&\lesssim &\left(\frac{g_T'(1)g_T'(1+x)\eps\delta}{f_T(1)(f_T(1+x)-f_T(1))}\right)^{\alpha},
\end{eqnarray*}
as desired.
\end{proof} 
\section{Proof of Theorems \ref{thm::boundary_dimension} and \ref{thm::boundary_proximity}}\label{sec::boundaryproximity}
\begin{proof}[Proof of Theorem \ref{thm::boundary_dimension}]
Combining One-Point Estimate (\ref{onepoint}) and Two-Point Estimate (\ref{twopoints}) with \cite[Section 1.1]{BeffaraSLE6} and zero-one Law \cite[Lemma 3]{BeffaraSLE6} implies that the Hausdorff dimension is almost surely $1-\alpha$.
\end{proof}

In the remainder of this section, we fix $\kappa>0$ and $\rho\ge\kappa/2-2$. Suppose that $\eta$ is an $\SLE_{\kappa}(\rho)$ process with a single force point located at $x^{R}=0^+$. The proof of Theorem \ref{thm::boundary_proximity} given in this section is similar to the one in \cite[Section 3]{SchrammZhouBoundarySLE}. The major differences are that the sets considered in the present paper are all described by the Euclidean distance and all the estimates on the hitting probability are up to constants, while in \cite{SchrammZhouBoundarySLE}, the authors make use of a particular martingale to define the sets and established an explicit formula for the hitting probability of the corresponding set. We decide to skip the exactly same details as in \cite{SchrammZhouBoundarySLE}, only sketch the outline of the proof and point out the major differences.

\begin{lemma}\label{estimateofmartingale1}
Let $M_t^x$ be the local martingale defined in Lemma \ref{lem::martingale}.
Let $t>0$, $x_{0}:=\Re\eta(t)$ and $y_{0}:=\Im\eta(t)$. Suppose that $x_{0}>0$, $x>x_{0}+y_{0}$, and $\eta([0,t))$ does not intersect the vertical line segment $[x_{0}, \eta(t)]$. Then
\[M_{t}^{x}\gtrsim\frac{1}{\dist(x, \eta([0,t]))^{\alpha}}.\]
\end{lemma}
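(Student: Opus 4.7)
Write $M_t^x = A_t \cdot B_t$ where
\[A_t := \Bigl(\tfrac{g_t'(x)}{g_t(x)-V_t}\Bigr)^{\alpha}, \qquad B_t := \Bigl(\tfrac{g_t(x)-V_t}{g_t(x)-W_t}\Bigr)^{\beta}.\]
For the first factor, Lemma \ref{lem::mart_dist} (with force point $x^{R}=0^+$, so the constants in the lemma are uniform) gives $\frac{g_t(x)-V_t}{g_t'(x)} \asymp \dist(x,\eta([0,t]))$, hence $A_t \asymp \dist(x,\eta([0,t]))^{-\alpha}$. So it remains to prove $B_t \gtrsim 1$.

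Since $\beta>0$ and $V_t \ge W_t$, $B_t \in (0,1]$, and $B_t \gtrsim 1$ is equivalent to $V_t - W_t \lesssim g_t(x) - V_t$. Because $\rho \ge \kappa/2-2$ forces $\eta$ to avoid $\R_+$, $g_t$ extends continuously and monotonically to $\R_+\subset \partial H_t$, and $g_t(x) \ge g_t(x_0)$ because $x > x_0 + y_0 \ge x_0$. Therefore it suffices to show the sharper estimate
\begin{equation*}
V_t - W_t \lesssim g_t(x_0) - V_t. \tag{$\star$}
\end{equation*}

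To prove $(\star)$, observe that by hypothesis the vertical segment $L := [x_0, \eta(t)]$ (of length $y_0$) is a crosscut of $H_t$: its two endpoints are on $\partial H_t$ (one on $\R_+$, one at the tip), and its interior lies in $H_t$. Thus $L$, the right side of $\eta([0,t])$, and $[0, x_0] \subset \R_+$ bound a Jordan sub-domain $H_t^R \subset H_t$, which $g_t$ maps conformally onto a Jordan sub-domain of $\HH$ bounded by the arc $g_t(L)$ (from $g_t(x_0)$ to $W_t$) together with the interval $[W_t, g_t(x_0)] \subset \R$; the intermediate point $V_t$ splits $[W_t, g_t(x_0)]$ into the image $[W_t, V_t]$ of the right side of $\eta([0,t])$ and the image $[V_t, g_t(x_0)]$ of $[0, x_0]$. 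A conformal-modulus / harmonic-measure comparison should yield $(\star)$; as a sanity check, in the explicit reference case $H_t^R = \HH \setminus L$ (the curve is just the segment $L$ itself), the conformal map is $z \mapsto x_0 + \sqrt{(z-x_0)^2 + y_0^2}$, giving $V - W = y_0$ and $g(x_0 + y_0) - V = (\sqrt{2}-1)y_0$, so $(\star)$ holds with an explicit absolute constant.

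The main obstacle is making the comparison of the previous paragraph uniform over all admissible curves. In particular, $\eta$ may make large excursions far from the tip before returning, so the right side of the curve can be a long, complicated arc whose image under $g_t$ is nevertheless short. The cleanest route I would take is to bound the conformal modulus of the quadrilateral $H_t^R$ with distinguished arcs $L$ and the right side of $\eta([0,t])$ directly from the length $y_0$ of $L$ and the hypothesis $x > x_0+y_0$, then translate back via conformal invariance. A robust alternative is a Brownian-motion coupling: condition on the BM (from a point $Ri$ with $R\to\infty$ inside $H_t$) hitting the right side of the curve, and show that $L$ provides, uniformly in the curve's shape, a positive-probability ``deflection'' onto $[0,x_0]$, so that the harmonic-measure ratio that equals $(V_t-W_t)/(g_t(x_0)-V_t)$ is bounded by an absolute constant.
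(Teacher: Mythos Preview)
Your decomposition $M_t^x = A_t B_t$ and the treatment of $A_t$ via Lemma~\ref{lem::mart_dist} are exactly what the paper does. The paper then says that the lower bound on $(g_t(x)-V_t)/(g_t(x)-W_t)$ follows from the proof of \cite[Lemma~2.2]{SchrammZhouBoundarySLE}, so your strategy is the right one. However, your reduction to the ``sharper'' inequality $(\star)$ is a genuine error: $(\star)$ is false in general, so the argument cannot be completed along that line.

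Here is a counterexample. Take $y_0>0$ and let the curve be the L-shape $0\to iy_0\to \epsilon+iy_0$ for small $\epsilon>0$, so that $x_0=\epsilon$ and $L=[\epsilon,\epsilon+iy_0]$ is not met by $\eta([0,t))$. For small $\epsilon$ the map $g_t$ is uniformly close to $z\mapsto\sqrt{z^2+y_0^2}$, giving $W_t\approx 0$, $V_t=g_t(0^+)\approx y_0$, and $g_t(x_0)=g_t(\epsilon)\approx y_0+\epsilon^2/(2y_0)$. Thus $V_t-W_t\approx y_0$ while $g_t(x_0)-V_t\approx \epsilon^2/(2y_0)$, and $(\star)$ fails as badly as one likes. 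Note, however, that for $x\ge x_0+y_0\approx y_0$ one has $g_t(x)-V_t\approx\sqrt{x^2+y_0^2}-y_0\ge(\sqrt 2-1)y_0$, so $B_t$ is bounded below by a universal constant even though $(\star)$ collapses. The hypothesis $x>x_0+y_0$ is therefore not a convenience to be discarded via monotonicity; it is exactly what makes $B_t\gtrsim 1$ true.

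The Schramm--Zhou argument you should emulate compares, via harmonic measure from $\infty$ in $H_t$, the right side of $\eta([0,t])$ (whose measure is $\tfrac{1}{\pi}(V_t-W_t)$) against the interval $[x_0,x]\supset[x_0,x_0+y_0]$ (whose measure is $\tfrac{1}{\pi}(g_t(x)-g_t(x_0))\le\tfrac1\pi(g_t(x)-V_t)$). The first is $\lesssim y_0$ because any Brownian path from $\infty$ to the right side of $\eta$ must cross the slit $L$ of length $y_0$; the second is $\gtrsim y_0$ because $[x_0,x_0+y_0]$ sits on the boundary of the unbounded component $H_t^L$ and is not shielded from $\infty$ by $L$. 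Combining these gives $V_t-W_t\lesssim g_t(x)-V_t$ directly, without passing through $(\star)$.
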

\begin{proof}
From Lemma \ref{lem::mart_dist}, we have that
\[\frac{g_t'(x)}{g_t(x)-V_t}\gtrsim \frac{1}{\dist(x,\eta([0,t]))}.\]
Thus, it is sufficient to give a lower bound for the quantity $(g_t(x)-V_t)/(g_t(x)-W_t)$.
This can be obtained by the same proof as in the proof of  \cite[Lemma 2.2]{SchrammZhouBoundarySLE}.
\end{proof}

\begin{proof}[Proof of Theorem \ref{thm::boundary_proximity}, bounded case]
The same proof as in \cite[Section 3.1]{SchrammZhouBoundarySLE} works and we need to replace the martingale by the martingale defined in Lemma \ref{lem::martingale} and replace \cite[Lemma 2.2]{SchrammZhouBoundarySLE} by Lemma \ref{estimateofmartingale1}.
\end{proof}

\begin{proof}[Proof of Theorem \ref{thm::boundary_proximity}, unbounded case]
The same proof as in \cite[Section 3.2]{SchrammZhouBoundarySLE} works with the following modifications.
\begin{itemize}
\item Define the random set $X:=\{x\ge r: \dist(x, \eta)\le h(x)\}$.
\item Replace \cite[Equation (2.9)]{SchrammZhouBoundarySLE} by One-Point Estimate (\ref{onepoint}).
\item Replace \cite[Equation (2.14)]{SchrammZhouBoundarySLE} by Two-Point Estimate (\ref{twopoints}).
\item Replace \cite[Equation (3.20)]{SchrammZhouBoundarySLE} by Lemma \ref{lem::onepoint_conditional}.
\item Replace  \cite[Equation (3.21)]{SchrammZhouBoundarySLE} by Lemma \ref{lem::twopoint_conditional}.
\end{itemize}
\end{proof}

\begin{proof}[Proof of Remark \ref{rem::estimate_hittingproba}, upper bound]
Denote $\Lambda^h_{\kappa,\rho}(x)$ by $\Lambda(x)$, and define the stopping time $\tau:=\inf\{t\ge 0: \eta(t) \text{ hits }\Gamma^h\}$. We need to evaluate $\PP[\tau<\infty]$.

Denote by $M^x_t$ the local martingale defined in Lemma \ref{lem::martingale}, and define, for $t\ge 0$,
\[Z_t:=\int_r^{\infty}\Lambda(x)M_t^x dx.\]
Then $(Z_t, t\ge 0)$ is a supermartingale with $Z_{0}=\int_{r}^{\infty}\Lambda(x)x^{-\alpha}dx$.

Given $\eta([0,\tau])$ and on the event $[\tau<\infty]$, denote $\Re{\eta(\tau)}$ (resp. $\Im{\eta(\tau)}$) by $x_0$ (resp. $y_0$),
we have that
\begin{align}
Z_{\tau}&=\int_r^{\infty}\Lambda(x)M_{\tau}^x dx\notag\\
&\ge \int_{x_0+y_0}^{2x_0}\Lambda(x)M_{\tau}^x  dx \tag{\text{Note that $y_0\le x_0/2$}}\\
&\gtrsim \Lambda(x_0)\int_{x_0+y_0}^{2x_0}M_{\tau}^x  dx\tag{\text{By Equation (\ref{condition1})}}\\
&\gtrsim \Lambda(x_0)\int_{x_0+y_0}^{2x_0}(x-x_0)^{-\alpha} dx\gtrsim 1\tag{\text{By Lemma \ref{estimateofmartingale1}}}.
\end{align}
Therefore, we obtain the upper bound:
\[Z_0\ge \E[Z_{\tau}1_{[\tau<\infty]}]\gtrsim \PP[\tau<\infty].\]
\end{proof}

\begin{proof}[Proof of Remark \ref{rem::estimate_hittingproba}, lower bound]
Denote $\Lambda^h_{\kappa,\rho}(x)$ by $\Lambda(x)$, and define the stopping time $\tau:=\inf\{t\ge 0: \eta(t) \text{ hits }\Gamma^h\}$. We need to evaluate $\PP[\tau<\infty]$.

Define
\[q:=\int_r^{\infty}\frac{\Lambda(x)}{x^{\alpha}}dx,\quad  Q:=\int_r^{\infty}\frac{\Lambda(x)}{h(x)^\alpha}1_{[\dist(x,\eta)\le h(x)]}dx.\]
By One-Point Estimate (\ref{onepoint}), we have that
$\E[Q]\asymp q$. By the same proof as in \cite[Section 3.2]{SchrammZhouBoundarySLE} (replacing \cite[Equation (2.9)]{SchrammZhouBoundarySLE} by One-Point Estimate (\ref{onepoint}) and replacing \cite[Equation (2.14)]{SchrammZhouBoundarySLE} by Two-Point Estimate (\ref{twopoints})), we have that
$\E[Q^2]\lesssim q$. By Paley-Zygmund inequality, we have that
\[q^2\asymp \E[Q]^2\le \PP[Q>0]\E[Q^2]\lesssim \PP[Q>0]q.\]
Thus,
\[\PP[Q>0]\gtrsim q.\]
The event $[Q>0]$ implies that there exists some $x>r$ such that $\dist(x,\eta)\le h(x)$. The readers can check that, it is possible to find a function $\tilde{h}$ satisfying the following conditions:
\begin{itemize}
\item $\tilde{h}$ satisfies the same assumptions for $h$;
\item There exists some $\Delta>0$ such that the integral $\int_{r+\Delta}^{\infty}(\tilde{\Lambda}(x)/x^{\alpha})dx \asymp q$, where $\tilde{\Lambda}$ is the same function as in Equation (\ref{eqn::lambda_graph}) defined for $\tilde{h}$;
\item  For any $x\ge r+\Delta$, the Euclidean balls with center $x$ and radius $\tilde{h}(x)$ are contained in the region $\Gamma^h$.
\end{itemize}
By the same proof, we have that, with probability $\gtrsim q$, there exists some $x>r+\Delta$ such that $\dist(x,\eta)\le \tilde{h}(x)$; thus, with probability $\gtrsim q$, the curve $\eta$ hits $\Gamma^h$. 
\end{proof}

\bibliographystyle{alpha}
\bibliography{hao_wu_thesis}
\bigbreak
\noindent Department of Mathematics\\
\noindent Massachusetts Institute of Technology\\
\noindent Cambridge, MA, USA\\
\noindent mengluw@math.mit.edu\\
\noindent hao.wu.proba@gmail.com

\end{document}